\newtheorem{theorem}{Theorem}[section]
\newtheorem{corollary}[theorem]{Corollary}
\newtheorem{lemma}[theorem]{Lemma}
\newtheorem{proposition}[theorem]{Proposition}
\theoremstyle{definition}
\newtheorem{remark}[theorem]{Remark}
\newcommand{\nm}[1]{\vert\kern-0.25ex\vert #1 \vert\kern-0.25ex\vert}
\newcommand{\C}{\mathbb{C}}
\newcommand{\R}{\mathbb{R}}
\newcommand{\N}{\mathbb{N}}
\newcommand{\diam}{\textnormal{diam}}
\newcommand{\Lip}[1]{\textnormal{Lip}(#1)}
\newcommand{\free}[1]{\mathcal{F}(#1)}
\newcommand{\Lipo}[1]{\textnormal{Lip}_0(#1)}
\newcommand{\lipo}[1]{\textnormal{lip}_0(#1)}
\newcommand{\tildem}{\accentset{\sim}}
\newcommand{\cM}{\mathcal{M}}
\newcommand{\cB}{\mathcal{B}}
\newcommand{\cP}{\mathcal{P}}
\newcommand{\cA}{\mathcal{A}}
\newcommand{\cT}{\mathcal{T}}
\DeclareMathOperator{\Liploc}{Lip}
\numberwithin{equation}{section}
\title{Lipschitz-free spaces over Cantor sets and approximation properties}
\author[F. Talimdjioski]{Filip Talimdjioski}
\address[F. Talimdjioski]{School of Mathematics and Statistics, University College Dublin, Belfield, Dublin 4, Ireland}
\email{filip.talimdjioski@ucdconnect.ie}
\date{\today}
\begin{document}
	
\begin{abstract}
	Let $K=2^\N$ be the Cantor set, let $\cM$ be the set of all metrics $d$ on $K$ that give its usual (product) topology, and equip $\cM$ with the topology of uniform convergence, where the metrics are regarded as functions on $K^2$. We prove that the set of metrics $d\in\cM$ for which the Lipschitz-free space $\free{K,d}$ has the metric approximation property is a residual $F_{\sigma\delta}$ set in $\cM$, and that the set of metrics $d\in\cM$ for which $\free{K,d}$ fails the approximation property is a dense meager set in $\cM$. This answers a question posed by G.~Godefroy.
\end{abstract}
	
\keywords{Lipschitz-free space, Cantor set, approximation property}
\subjclass[2020]{Primary 46B20, 46B28}
\maketitle

\section{Introduction}\label{sect_intro}

Lipschitz-free spaces have become an active research area in Banach space theory in recent years. For a metric space $(M,d)$ and a point $x_0\in M$ we define the Banach space $\Lipo{M,x_0}$ consisting of all real-valued Lipschitz functions $f$ on $M$ that vanish at $x_0$, equipped with the norm $$\nm{f} := \sup_{x,y\in M, x\not= y} \frac{|f(x)-f(y)|}{d(x,y)}.$$ For any $x\in M$ we define the bounded linear functional $\delta_x\in \Lipo{M,x_0}^*$ by $\delta_x(f) = f(x)$, $f\in\Lipo{M,x_0}$. The closed linear span of the set $\{\delta_x : x\in M\}$ is called the \emph{Lipschitz-free space} $\free{M,x_0}$. It is well-known that $\Lipo{M,x_0}$ is isometric to the dual space of $\free{M,x_0}$ and that the Banach space structure of both spaces does not depend on the choice of the base point $x_0$. We will hereafter write $\Lipo{M}$ and $\free{M}$ without specifying the base point, and call $\free{M}$ simply the free space over $M$. In the book \cite{weaver}, Weaver provides a comprehensive introduction to Lipschitz and Lipschitz-free spaces. In it, the latter are called Arens-Eells spaces and are denoted by \AE$(M)$.

One direction of research in this area has been the approximation properties of free spaces. Results involving the approximation property (AP) appear in \cites{godefroyozawa, hajeklancienpernecka, kalton}. Results involving the bounded approximation property (BAP) appear in \cites{godefroykalton, godefroy:15, apandschd}, and metric approximation property (MAP) results appear in \cites{godefroykalton, adalet, godefroyozawa, douchakaufmann, cuthdoucha,fonfw,ps:15,smithtalim}. More details can be found in the introduction to \cite{smithtalim} and in \cite{godefroy:20}.

In particular, we mention that if $M$ is a sufficiently `thin' totally disconnected metric space then $\free{M}$ has the MAP. For example, if $M$ is a countable proper metric space (i.e.~where all closed balls are compact), then $\free{M}$ has the MAP \cite{adalet}. Also, as a corollary of \cite{weaver}*{Corollary 4.39}, $\free{M}$ has the MAP when $M$ is compact and uniformly disconnected (this notion is defined at the beginning of \Cref{sect_prelim}). Moreover, as a corollary of \cite{apandschd}*{Proposition 2.3} and \cite{purely1unrect}*{Theorem B}, $\free{M}$ has the MAP when $M$ is a subset of a finite-dimensional normed space and is purely 1-unrectifiable, which is equivalent to the condition that $M$ contains no bi-Lipschitz image of a compact subset of $\R$ of positive measure. On the other hand, in \cite{godefroyozawa}, G.~Godefroy and N.~Ozawa constructed a compact convex subset $C$ of a separable Banach space such that $\free{C}$ fails the AP. Also, by a result in \cite{hajeklancienpernecka}, there exists a metric space $M$ homeomorphic to the Cantor space such that $\free{M}$ fails the AP.

Given a compact metric space $M$ with metric $d$ and a Lipschitz function $f\colon M\to\R$, we say that $f$ is \emph{locally flat} \cite{weaver}*{Definition 4.1} if for every $x\in M$, $$ \lim_{\epsilon \to 0} \Lip{f|_{B_\epsilon(x)}} = 0,$$ where $B_\epsilon(x) = \{y\in M : d(y,x) < \epsilon\}$. The \emph{Little Lipschitz space} $\lipo{M}$ is the subspace of $\Lipo{M}$ consisting of all locally flat Lipschitz functions. By \cite{weaver}*{Corollary 4.5}, $\lipo{M}$ is a Banach space. It often happens that $\lipo{M} = \{0\}$, for example when $M$ is a connected smooth submanifold of $\R^N$. We say that $\lipo{M}$ \emph{separates points uniformly} \cite{weaver}*{Definition 4.10} if there exists $a\in (0,1]$ such that for every $x,y\in M$ there is $f\in\lipo{M}$ such that $\Lip{f} \leq 1$ and $|f(x)-f(y)| = ad(x,y)$. By \cite{purely1unrect}*{Theorem A, Theorem B}, $\lipo{M}$ separates points uniformly if and only if $\lipo{M}$ is an isometric predual to $\free{M}$, which holds if and only if $M$ is purely 1-unrectifiable. If $M$ is compact and purely 1-unrectifiable, then by \cite{casazzaap}*{Proposition 3.5}, if $\free{M}$ has the MAP then $\lipo{M}$ has the MAP. But also, by the remarks after \cite{godefroysurvey}*{Problem 6.5}, if $\lipo{M}$ has the MAP then $\free{M}$ has the MAP as well.

In \cite{godefroysurvey}, G.~Godefroy surveys various aspects of the theory of free spaces, including the lifting property for separable Banach spaces, approximation properties of free spaces, and norm attainment of Lipschitz functions and operators. Regarding the bounded approximation property of the free space over a compact set $M$, he states a very useful criterion (see the end of \Cref{sect_prelim}) in terms of `almost-extension' operators from Lipschitz spaces over finite subsets of $M$ to $\Lipo{M}$. In the last section he states a number of open problems, several of which concern approximation properties of free spaces.

Let $K=2^\N$ be the Cantor set, equipped with the usual (product) topology. Define $\cM \subseteq C(K^2)$ to be the space of all metrics $d$ on $K$ compatible with its topology. We equip $\cM$ with the metric induced by the usual (supremum) norm of $C(K^2)$. The set $\cM$ is a $G_\delta$ subset of $C(K^2)$ (a proof is provided at the beginning of \Cref{sect_prelim}), and is therefore a Polish (i.e.~separable and completely metrisable) space. For $d\in\cM$ we write $\Lipo{K,d}$ and $\free{K,d}$ for the Lipschitz space and free space over the metric space $(K,d)$, respectively. We define the following subsets of $\cM$:
\begin{align*}
	\cA^\lambda &= \{d\in\cM \; :\; \free{K,d} \text{ has the } \lambda\text{-BAP}\}, \text{ for } \lambda \in [1,\infty), \\
	\cA_f &= \{d\in\cM \; :\; \free{K,d} \text{ fails the AP} \}, \\
	\cP &= \{d\in\cM \; :\; (K,d) \text{ is purely 1-unrectifiable} \}.
\end{align*}
In \cite{godefroysurvey}*{Problem 6.6} Godefroy asks what the topological nature of the set $A_f$ is (it is nonempty by \cite{hajeklancienpernecka}*{Corollary 2.2}). Also, more precisely, he asks whether the set $A_f$ is residual in $\cM$.

In this paper, we investigate the topological properties the subsets of $\cM$ defined above. Section \ref{sect_prelim} concerns notation and preliminary results. In Section \ref{section_top_prop} we prove that the set $\cA^1$ is a residual $F_{\sigma\delta}$ set in $\cM$, and that $\cA_f$ is a dense meager set. Furthermore we prove that $\cP$ is a dense $G_\delta$ set and $\cM\setminus \cP$ is dense. As a corollary we obtain that the set of metrics $d$ for which $\free{M}$ is a dual space to $\lipo{M}$ and both $\free{M}$ and $\lipo{M}$ have the MAP is residual. Finally, in \Cref{sect_lipequiv}, we construct a family $(d_\alpha)$ of metrics on $K$ of size continuum, such that $\Lipo{K,d_\alpha}$ and $\Lipo{K,d_\beta}$ are not isomorphic as algebras for $\alpha\not= \beta$. This should be compared with \cite{hajeklancienpernecka}*{Corollary 2.3}, wherein it is shown that there exists an family $(d_\alpha)$ of metrics on $K$ of size $\aleph_1$, such that $\free{K, d_\alpha}$ is not isomorphic to $\free{K, d_\beta}$ for $\alpha \not= \beta$.

\section{Notation and preliminary results}\label{sect_prelim}

For a metric space $(A,d)$, $x\in A$ and $r>0$ we write $B^d_r(x) = \{y\in A : d(x,y) < r\}$. If $C\subseteq A$ then write $B^d_r(C) = \{y\in A : d(y,C) < r\}$, where $d(y,C) = \inf\{d(y,x) : x\in C\}$. For a real-valued Lipschitz function $f$ on $(A,d)$, $\Liploc_d(f)$ denotes the Lipschitz constant of $f$ with respect to $d$. If $(B,e)$ is another metric space and $f\colon A\to B$ then the Lipschitz constant of $f$ is denoted by $\Liploc_{d,e}(f)$. We call $(A,d)$ and $(B,e)$ \emph{proportional} if there exists a surjection $f\colon A\to B$ and $c>0$ such that $d(x,y) = ce(f(x),f(y))$ for all $x,y\in A$. The space $(A,d)$ is called \emph{uniformly disconnected} \cite{weaver}*{Proposition 4.12} if there exists $r \in (0, 1]$ such that for any distinct $x,y \in A$ there are complementary clopen sets $C,D\subseteq A$ such that $x\in C, y\in D$ and $d(C,D) \geq r d(x,y)$, where $d(C,D) = \inf\{d(z,t) : z\in C, t\in D\}$.

For $d\in\cM$, $K_1, K_2\subseteq K$, and $x\in K$ we put $D(K_1,K_2) = \sup\{d(x,y) : x\in K_1, y\in K_2\}$, and $D(x,K_1) = D(\{x\}, K_1)$. We write $D(K_1)$ or $\diam_d(K_1)$ for $D(K_1,K_1)$. If $S$ is a nonempty set then we call a finite family $\{S_1,\ldots,S_n\}$ of nonempty subsets of $S$ a \emph{partition} of $S$ if $S_i\cap S_j = \emptyset$ whenever $i\not=j$ and $\bigcup_{i=1}^n S_i = S$. If $X$ is a Banach space then $B_X$ denotes the closed unit ball of $X$. If $Y$ is another Banach space then $X\simeq Y$ means that $X$ is isomorphic to $Y$.

We will first provide a short proof of the fact that $\cM$ is a $G_\delta$ set in $C(K^2)$. Let $\mu$ be the canonical metric on $K$: 
\[
\mu(x,y) = \begin{cases}0 & \text{if }x=y,\\ 2^{-n} & \text{if $x\neq y$ and $n\in \N$ is minimal, such that $x(n) \neq y(n)$.} \end{cases}
\] 
Define
\begin{align*}
	A = \{f\in C(K^2) : f(x,x) = f(x,y)-f(y,x) = 0 \leq f(x,y)+f(y,z)-f(x,z) \text{ for all } x,y,z\in K\},
\end{align*}
and for $n\in\N$, define $$B_n = \{f\in C(K^2) \; :\; f(x,y) > 0 \text{ whenever } \mu(x,y) \geq 2^{-n}\}.$$ The set $A$ is clearly closed and hence $G_\delta$ in $C(K^2)$, and the sets $B_n$ are open by compactness. Therefore the set $A \cap \bigcap_{n=1}^\infty B_n$ is $G_\delta$. Pick any $d\in A \cap \bigcap_{n=1}^\infty B_n$ and observe that $d$ is a metric on $K$. Let $\cT_d$ be the topology on $K$ induced by $d$. Since for any $x\in K$, the function $d(x,\cdot)$ is continuous on $K$, $B_r^d(x)$ is open in the topology of $K$ for any $r>0$. Thus $\cT_d$ is a coarser topology than the one on $K$. Therefore any closed subset $C$ of $K$ is compact in $\cT_d$, and is therefore closed, because $\cT_d$ is Hausdorff. Thus $\cT_d$ agrees with the topology on $K$, so $d\in\cM$. As $d$ was arbitrary, $\cM = A \cap \bigcap_{n=1}^\infty B_n$, and so $\cM$ is $G_\delta$.

The following is a crucial lemma that allows us to make small changes to a given metric on $K$ in a very flexible way.

\begin{lemma}\label{lm:mainmetric}
	Suppose that $d\in \cM, \epsilon > 0, \delta \in (0,1]$ and $K'$ is a nonempty clopen subset of $K$. Let $\{K_1',\ldots,K_n'\}$ be an arbitrary partition of $K'$ into clopen sets satisfying $D(K_i') < \frac{\epsilon}{2}$, and let $e_1,\ldots,e_n\in\cM$ be arbitrary. Then there exists $\tildem d\in \cM$ such that $\nm{d-\tildem d}_\infty < \epsilon$, $d$ and $\tildem d$ agree on $K\setminus K'$, $(K_i',\tildem d)$ is proportional to $(K,e_i)$, and $\tildem D(K_i') \leq \delta$ for $i=1,\ldots,n$.
\end{lemma}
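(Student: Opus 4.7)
My plan is to define $\tildem d$ piecewise: inside each $K_i'$ I transplant the metric $e_i$ from $K$ via a homeomorphism $\psi_i\colon K_i' \to K$, scaled by a small constant $c_i > 0$; between different pieces, and between a piece and the complement $K \setminus K'$, I compute distances after collapsing each $K_i'$ to a chosen representative $y_i \in K_i'$.

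To set this up, I would choose $y_i \in K_i'$ and a homeomorphism $\psi_i \colon K_i' \to K$ for each $i$; such $\psi_i$ exists because every nonempty clopen subset of the Cantor space is again homeomorphic to $K$. Let $s_i = d(K_i', K \setminus K_i') > 0$ (positive by compactness, with the convention $s_i = +\infty$ in the degenerate case $K_i' = K$), and pick $c_i > 0$ small enough that $c_i \diam_{e_i}(K) \leq \min(\delta, \epsilon/2, 2 s_i)$. Writing $\pi(x) = y_i$ when $x \in K_i'$ and $\pi(x) = x$ when $x \in K \setminus K'$, I would then put
$$\tildem d(x,y) = \begin{cases} c_i \, e_i(\psi_i(x), \psi_i(y)) & \text{if } x, y \in K_i' \text{ for some } i, \\ d(\pi(x), \pi(y)) & \text{otherwise.} \end{cases}$$

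The next step is to verify that $\tildem d$ is a metric lying in $\cM$. Symmetry and positivity are routine (positivity uses injectivity of each $\psi_i$ and that $\pi$ separates the pieces). The triangle inequality requires a case analysis on which of the sets $K\setminus K', K_1', \ldots, K_n'$ contain the three points. All cases reduce either to the triangle inequality of $d$ or of $c_i e_i$, with the single exception of the case where two of the points lie in a common $K_i'$ while the third lies outside $K_i'$: this reduces to $c_i \, e_i(\psi_i(x), \psi_i(z)) \leq 2 s_i$, which is precisely why $c_i$ was chosen with $c_i \diam_{e_i}(K) \leq 2 s_i$. Continuity of $\tildem d$ on $K^2$ follows because the clopen partition of $K$ induces a clopen partition of $K^2$, and on each piece $\tildem d$ is continuous (either a pullback of $d$ via $\pi \times \pi$, a scaled pullback of $e_i$ via $\psi_i \times \psi_i$, or a constant). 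The induced topology agrees with the usual one since $\psi_i$ is a homeomorphism on each $K_i'$ and distinct pieces remain $\tildem d$-separated by at least $\min_i s_i > 0$, so $\tildem d \in \cM$.

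The remaining properties are bookkeeping. By construction $\tildem D(K_i') = c_i \diam_{e_i}(K) \leq \delta$; the map $\psi_i$ realises the proportionality of $(K_i', \tildem d)$ with $(K, e_i)$ with constant $c_i$; and $d$ and $\tildem d$ agree on $(K\setminus K')^2$. For the sup-norm bound, points in a common $K_i'$ satisfy both $d(x,y) < \epsilon/2$ and $\tildem d(x,y) \leq \epsilon/2$, while for $x \in K_i'$ and $y$ outside $K_i'$ one has $|d(x,y) - \tildem d(x,y)| \leq d(x, y_i) + d(y, \pi(y)) < \epsilon$, using $\diam_d(K_i') < \epsilon/2$ (with $d(y,\pi(y)) = 0$ when $y \in K \setminus K'$). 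The main obstacle is the triangle-inequality case analysis; once the bound $c_i \diam_{e_i}(K) \leq 2 s_i$ is identified, the remaining checks are straightforward.
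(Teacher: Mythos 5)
Your proof is correct, and it follows the same high-level strategy as the paper's --- transplant a scaled copy of $e_i$ into each $K_i'$ and keep $d$ outside --- but the gluing across pieces is genuinely different. The paper defines cross-distances by suprema, setting $\tildem d(x,y) = D(x,K_i')$ for $x \notin K'$, $y \in K_i'$ and $\tildem d(x,y) = D(K_i',K_j')$ for points in distinct pieces, and caps the new internal diameter of $K_i'$ by $\min(\delta, D(K_i'))$; the critical triangle-inequality case (two points in a common $K_i'$, one outside) then works because the new internal diameter does not exceed the old one, which is at most the sum of the two sup-distances to any external point. You instead collapse each $K_i'$ to a representative $y_i$ and pull back $d$, and you make the critical case work by capping the internal diameter by $2s_i$ with $s_i = d(K_i', K\setminus K_i')$; this check goes through as you describe, and the remaining cases, the continuity argument, and the $\epsilon$-estimate are all sound. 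Two points of comparison. First, your construction genuinely needs the separation constants $s_i > 0$ (available by compactness, with your convention when $K_i'=K$), whereas the paper's sup-based gluing needs no lower bound on the separation between pieces. Second, the paper's construction has the extra feature recorded in \Cref{rmk:dtildebigger}, namely $\tildem d(x,y) \geq d(x,y)$ whenever $x,y$ do not both lie in a common $K_i'$, which is invoked later in the proof of \Cref{prop:uniondenseopen}; with your gluing this pointwise inequality can fail (for $x\in K_i'$, $y\in K_j'$ one has $\tildem d(x,y)=d(y_i,y_j)$, which may be smaller than $d(x,y)$), although the set-level inequality $\tildem d(K_i',K_j') = d(y_i,y_j) \geq d(K_i',K_j')$ still holds and is what that later proof actually uses. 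As a proof of the lemma as stated, your argument is complete.
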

\begin{proof}
	Let $d\in\cM, \epsilon > 0,$ and $K'\subseteq K$ be given. Suppose that $\{K_1',\ldots,K_n'\}$ is a partition of $K'$ satisfying 
	\begin{align}\label{eq:diamepsilon2}
		D(K_i') < \frac{\epsilon}{2}, \text{ for } i=1,\ldots,n,
	\end{align}
	and let $e_1,\ldots,e_n\in\cM$ be arbitrary. As each $K_i'$ is homeomorphic to $K$, we can find a metric $d_i$ on $K_i'$ compatible with its topology such that $(K_i',d_i)$ is isometric to $\left(K,\frac{\min(\delta, D(K_i'))}{E_i(K)} e_i\right)$. Hence
	\begin{align}\label{eq:metricdiam2}
		D_i (K_i') = \min(\delta, D(K_i')).
	\end{align}
	Now define $\tildem d\colon K^2\to [0,\infty)$ by
	\begin{equation*}
		\tildem d(x,y) = 
		\begin{cases}
			d(x,y), & \text{if } x,y\in K\setminus K', \\
			D(x,K_i'), & \text{if } x\in K\setminus K' \text{ and } y\in K_i', \\
			D(y,K_i'), & \text{if } x\in K_i'  \text{ and } y\in K\setminus K', \\
			d_i(x,y), & \text{if } x,y\in K_i' \text{ for some } i, \\
			D(K_i',K_j') & \text{if } x\in K_i', y\in K_j' \text{ for } i \text{ and } j \text{ such that } i\not=j.
		\end{cases}
	\end{equation*}
	
	We will now show that $\tildem d$ is a metric on $K$. It is clearly symmetric and satisfies $\tildem d(x,y) = 0$ if and only if $x=y$. To show the triangle inequality, pick $x,y,z\in K$. If $x,y,z\in K\setminus K'$ or $x,y,z\in K_i'$ for some $i$ then the triangle inequality follows from the triangle inequality for the metric $d$ or $d_i$, respectively. We now consider the remaining cases:
	
	Case 1: $x,y\in K\setminus K'$ and $z\in K_i'$ for some $i$. 
	
	We have $$\tildem d(x,y) = d(x,y) \leq d(x,z) + d(z,y) \leq \tildem d(x,z) + \tildem d(z,y).$$ By compactness, there exists $z'\in K_i'$ such that $\tildem d(x,z) = d(x,z')$. Thus $$\tildem d(x,z) = d(x,z') \leq d(x,y) + d(y,z') \leq \tildem d(x,y) + \tildem d(y,z).$$ We can similarly show $\tildem d(y,z) \leq \tildem d(y,x) + \tildem d(x,z)$.
	
	Case 2: $x\in K\setminus K'$ and $y,z\in K_i'$ for some $i$. 
	
	As $\tildem d(x,y) = \tildem d(x,z)$ we have $\tildem d(x,y) \leq \tildem d(x,z) + \tildem d(z,y)$ and $\tildem d(x,z) \leq \tildem d(x,y) + \tildem d(y,z)$. Now let $y',z'\in K_i'$ be such that $D(K_i') = d(y',z')$. We have 
	\begin{align*}
		\tildem d(y,z) = d_i(y,z) &\leq D(K_i') \tag*{by \eqref{eq:metricdiam2}} \\&= d(y',z') \leq d(x,y') + d(z',x) \leq \tildem d(x,y) + \tildem d(z,x).
	\end{align*}
	
	Case 3: $x\in K\setminus K', y\in K_i'$ and $z\in K_j'$ with $i\not=j$. 
	
	Let $y'\in K_i'$ be such that $\tildem d(x,y) = d(x,y')$. We have $$\tildem d(x,y) = d(x,y') \leq d(x,z) + d(z,y') \leq \tildem d(x,z) + \tildem d(z,y).$$ Similarly, $\tildem d(x,z) \leq \tildem d(x,y) + \tildem d(y,z)$. Now let $y_1\in K_i'$ and $z_1\in K_j'$ be such that $\tildem d(y,z) = d(y_1,z_1)$. We have $$\tildem d(y,z) = d(y_1,z_1) \leq d(y_1,x) + d(x,z_1) \leq \tildem d(y,x) + \tildem d(x,z).$$
	
	Case 4: $x,y\in K_i'$ and $z\in K_j'$, where $i\not=j$.
	
	Let $x',y'\in K_i'$ be such that $d(x',y') = D(K_i')$. We have 
	\begin{align*}
		\tildem d(x,y) = d_i(x,y) &\leq D(K_i') \tag*{\text{by} \eqref{eq:metricdiam2}} \\&= d(x',y') \leq d(x',z) + d(z,y') \leq \tildem d(x,z) + \tildem d(z,y).
	\end{align*}
	Also $$\tildem d(x,z) = \tildem d(y,z) \leq \tildem d(x,y) + \tildem d(y,z),$$ and similarly $\tildem d(y,z) \leq \tildem d(y,x) + \tildem d(x,z)$.
	
	Case 5: $x\in K_i',y\in K_j', z\in K_l'$ and $i,j$ and $l$ are distinct. 
	
	Let $x'\in K_i'$ and $y'\in K_j'$ be such that $\tildem d(x,y) = d(x',y')$. We have $$\tildem d(x,y) = d(x',y') \leq d(x',z) + d(z,y') \leq \tildem d(x,z) + \tildem d(z,y).$$
	
	Therefore $\tildem d$ is a metric on $K$. Also, $\tildem d$ is continuous on $K^2$ because it is continuous on each set of the form $K_i'\times K_j'$, $K_i'\times (K\setminus K')$ and $(K\setminus K') \times K_j'$. Therefore, similarly as in the proof that $\cM$ is $G_\delta$, we conclude that $\tildem d\in \cM$. To show $\nm{\tildem d - d}_\infty < \epsilon$, pick $x,y\in K$. If $x,y\in K\setminus K'$ then $\tildem d(x,y) = d(x,y)$. If $x\in K\setminus K'$ and $y\in K_i'$ for some $i$, then let $y'\in K_i'$ be such that $\tildem d(x,y) = d(x,y')$. Then $$|\tildem d(x,y) - d(x,y)| = |d(x,y') - d(x,y)| \leq d(y,y') < \frac{\epsilon}{2},$$ by \eqref{eq:diamepsilon2}. If $x,y\in K_i'$ for some $i$ then $$|\tildem d(x,y) - d(x,y)| \leq \tildem d(x,y) + d(x,y) = d_i(x,y) + d(x,y) < \epsilon,$$ by \eqref{eq:diamepsilon2} and \eqref{eq:metricdiam2}. Finally, if $x\in K_i',y\in K_j'$ for $i\not=j$ then let $x'\in K_i', y'\in K_j'$ be such that $\tildem d(x,y) = d(x',y')$. We have $$|\tildem d(x,y) - d(x,y)| = |d(x',y') - d(x,y)| \leq d(x,x') + d(y,y') < \epsilon,$$ by \eqref{eq:diamepsilon2}. The last assertion of the lemma follows from \eqref{eq:metricdiam2}.
\end{proof}

\begin{remark}\label{rmk:dtildebigger}
	Note that the metric $\tildem d$ satisfies $\tildem d(x,y) \geq d(x,y)$ whenever $x,y\in K$ and $x,y$ do not both belong to $K_i'$ for any $i=1,\ldots,n$.
\end{remark}

\begin{corollary}\label{cor:closemetric}
	Let $d\in\cM, \epsilon > 0,$ and $K'\subseteq K$ be a clopen subset. Then there exists a partition $\{K_1,\ldots,K_n\}$ of $K'$ consisting of clopen sets such that, for any arbitrary $e_1,\ldots,e_n\in\cM$, there exists a metric $\tildem d \in \cM$ such that $\nm{\tildem d - d}_\infty < \epsilon$, $\tildem d$ and $d$ agree on $K\setminus K'$, and $(K_i, \tildem d)$ is proportional to $(K,e_i)$ for each $i=1,\ldots,n$.
\end{corollary}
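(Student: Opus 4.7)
The statement of this corollary is essentially a quantifier rearrangement of \Cref{lm:mainmetric}: in the lemma, the partition is part of the hypothesis and may depend on the desired $\epsilon$, whereas here we need a single partition that works uniformly for every choice of $e_1, \ldots, e_n \in \cM$. Since the only constraint the lemma places on the partition is that each piece have $d$-diameter strictly less than $\epsilon/2$, our task reduces to producing a clopen partition $\{K_1, \ldots, K_n\}$ of $K'$ with $D(K_i) < \epsilon/2$ for every $i$.

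To construct such a partition, the plan is to exploit the fact that $K$ has a basis of clopen sets together with the continuity of $d$ on $K^2$. For each $x \in K'$, continuity of $d$ at the diagonal point $(x,x)$, combined with $d(x,x) = 0$, yields an open neighborhood of $x$ in $K$ on which $d(y,z) < \epsilon/2$ for all pairs $y,z$; shrinking this to a basic clopen set contained in $K'$ produces a clopen neighborhood $U_x \subseteq K'$ of $x$ satisfying $D(U_x) < \epsilon/2$. Compactness of $K'$ then furnishes a finite subcover $U_{x_1}, \ldots, U_{x_m}$, and the standard disjointification $K_i := U_{x_i} \setminus \bigcup_{j<i} U_{x_j}$ (after discarding any empty sets and relabelling) gives the required clopen partition, since each $K_i$ is a Boolean combination of clopen sets and thus clopen, and $D(K_i) \leq D(U_{x_i}) < \epsilon/2$.

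Having fixed this partition once and for all, the conclusion is immediate: given any $e_1, \ldots, e_n \in \cM$, I apply \Cref{lm:mainmetric} with $K' = \bigsqcup_{i=1}^n K_i$, the chosen partition, any $\delta \in (0,1]$ (the bound on $\tildem D(K_i)$ is not needed here), and the prescribed $e_1, \ldots, e_n$. The lemma delivers $\tildem d \in \cM$ with $\nm{\tildem d - d}_\infty < \epsilon$, with $\tildem d = d$ on $K \setminus K'$, and with $(K_i, \tildem d)$ proportional to $(K, e_i)$ for each $i$, which is exactly what the corollary asserts.

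There is no substantive obstacle here; the whole content of the corollary lies in recognising that the partition can be built purely from $d$, $\epsilon$ and $K'$, with no reference to the metrics $e_i$, after which everything is handed off to \Cref{lm:mainmetric}.
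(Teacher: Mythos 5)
Your proposal is correct and follows essentially the same route as the paper: build a finite clopen cover of $K'$ by sets of $d$-diameter less than $\epsilon/2$ (using compactness and the clopen base), disjointify it into a partition, and then invoke \Cref{lm:mainmetric} (the paper takes $\delta=1$). No issues.
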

\begin{proof}
	Let $d\in\cM, \epsilon > 0$ and $K'$ be given. Using the compactness of $K'$ and the fact that each point in $K'$ has a local base for its topology consisting of clopen neighbourhoods, we can find a cover $C_1,\ldots,C_p$ of $K'$ such that $C_i$ is clopen and $D(C_i) < \frac{\epsilon}{2}$ for each $i=1,\ldots,p$. Now inductively define $K_1' = C_1$ and $K_{i+1}' = C_{i+1} \setminus \bigcup_{j=1}^i K_j'$. By dismissing any empty $K_i'$ we obtain a partition $\{K_1',\ldots,K_n'\}$ of $K'$ (where $n\leq p$), consisting of clopen sets satisfying $D(K_i') < \frac{\epsilon}{2}$ for each $i=1,\ldots,n$. Now if $e_1,\ldots,e_n\in\cM$ are arbitrary, the corollary follows from an application of \Cref{lm:mainmetric} with $\delta=1$.
\end{proof}

We will also need the following results.

\begin{lemma}\label{lm:diagonallipschitz}
	Suppose that $d\in \cM$ and $(d_n)_n \subseteq \cM$ are such that $d_n\to d$ uniformly. Let $(M,\rho)$ be a compact metric space, $L>0$ and $h_n\colon K\to M$ be functions such that $\Liploc_{d_n,\rho}(h_n) \leq L$ for each $n\in\N$. Then there exists a subsequence $(h_{n_i})_i$ of $(h_n)_n$ and a function $h\colon K\to M$ such that $\Liploc_{d,\rho} (h) \leq L$ and $h_{n_i} \to h$ uniformly on $K$ as $i\to\infty$. Furthermore,
	\begin{enumerate}
		\item if $J>0$ and $h_n$ is bilipschitz with $\Liploc_{\rho,d_n}(h_n^{-1}) \leq J$ for all $n$, then $h$ is bilipschitz with $\Liploc_{\rho,d} (h^{-1}) \leq J$,
		\item if $h_n$ is surjective for all $n$, then $h$ is surjective.
	\end{enumerate}
\end{lemma}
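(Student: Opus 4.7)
The proof is a standard Arzelà--Ascoli type argument, with the twist that the Lipschitz constants of $h_n$ are measured against a varying metric $d_n$, so we first have to convert them into a condition against the fixed metric $d$.

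\textbf{Step 1: Equicontinuity with respect to $d$.} For any $x,y\in K$ and any $n$,
\[
\rho(h_n(x),h_n(y)) \leq L\, d_n(x,y) \leq L\bigl(d(x,y) + \nm{d_n-d}_\infty\bigr).
\]
Given $\epsilon>0$, pick $N$ so that $L\nm{d_n-d}_\infty < \epsilon/2$ for $n\geq N$, and set $\delta_0 = \epsilon/(2L)$; for each of the finitely many $n<N$, the function $h_n$ is Lipschitz, hence uniformly continuous with respect to $d$ (since $d$ and $d_n$ induce the same topology on $K$, which is compact), so there is $\delta_n$ such that $d(x,y)<\delta_n$ implies $\rho(h_n(x),h_n(y))<\epsilon$. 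Taking $\delta = \min(\delta_0,\delta_1,\ldots,\delta_{N-1})$ gives equicontinuity of $(h_n)$ on $(K,d)$.

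\textbf{Step 2: Extracting a limit.} Since $(M,\rho)$ is compact, the family $(h_n)$ is pointwise relatively compact, so the Arzelà--Ascoli theorem yields a subsequence $(h_{n_i})$ converging uniformly on $(K,d)$ to some continuous $h\colon K\to M$. To bound the Lipschitz constant, fix $x,y\in K$ and take $i\to\infty$ in
\[
\rho(h_{n_i}(x), h_{n_i}(y)) \leq L\, d_{n_i}(x,y).
\]
The left-hand side tends to $\rho(h(x),h(y))$ by continuity of $\rho$ and pointwise convergence, and the right-hand side tends to $L\,d(x,y)$ since $d_{n_i}\to d$ uniformly. This gives $\Liploc_{d,\rho}(h)\leq L$.

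\textbf{Step 3: The bilipschitz and surjective cases.} For (1), the hypothesis $\Liploc_{\rho,d_n}(h_n^{-1})\leq J$ says $d_n(x,y)\leq J\rho(h_n(x),h_n(y))$ for all $x,y\in K$; passing to the limit along $(n_i)$ as in Step 2 yields $d(x,y)\leq J\rho(h(x),h(y))$, so $h$ is injective and its inverse on the image satisfies $\Liploc_{\rho,d}(h^{-1})\leq J$. For (2), fix $m\in M$ and pick $x_{n_i}\in K$ with $h_{n_i}(x_{n_i})=m$; by compactness of $K$ we can pass to a further subsequence so that $x_{n_i}\to x\in K$, and then
\[
\rho(m,h(x)) \leq \rho\bigl(h_{n_i}(x_{n_i}),h(x_{n_i})\bigr) + \rho\bigl(h(x_{n_i}),h(x)\bigr) \to 0,
\]
using uniform convergence $h_{n_i}\to h$ for the first term and continuity of $h$ for the second, so $h(x)=m$.

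\textbf{Main obstacle.} The only nontrivial point is converting Lipschitz constants with respect to the moving metrics $d_n$ into an equicontinuity statement against the fixed $d$, which is handled in Step 1; everything else is routine once that is in place.
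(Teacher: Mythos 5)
Your proof is correct and follows essentially the same route as the paper: convert the $d_n$-Lipschitz bounds into $d$-equicontinuity using $\nm{d_n-d}_\infty\to 0$, apply Arzelà--Ascoli, and pass to the limit in the (bi)Lipschitz inequalities; your surjectivity argument via convergent preimages is a minor variant of the paper's observation that the limit point lies in the compact set $h(K)$. If anything, your Step 1 is slightly more careful than the paper's in explicitly handling the finitely many initial indices.
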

\begin{proof}
	We will show that the functions $(h_n)_n$ are $d$-$\rho$-equicontinuous. Pick $\epsilon > 0$, and pick $n\in\N$ such that $\nm{d_k-d}_\infty < \frac{\epsilon}{2L}$ for $k\geq n$. Then for $k\geq n$ and $x,y\in K$ such that $d(x,y) < \frac{\epsilon}{2L}$, $$\rho(h_k(x), h_k(y)) \leq Ld_k(x,y) \leq L\left(\frac{\epsilon}{2L} + d(x,y)\right) < \epsilon.$$ Therefore $(h_n)_n$ is equicontiuous on $(K,d)$. By the Arzel\` a-Ascoli theorem, there exists a continuous $h\colon K\to M$ and a subsequence $(h_{n_i})_i$ of $(h_n)_n$ such that $h_{n_i} \to h$ uniformly. For $x,y\in K$, $$\rho(h(x), h(y)) = \lim_{i\to\infty} \rho(h_{n_i}(x), h_{n_i}(y)) \leq \lim_{i\to\infty} Ld_{n_i}(x,y) = Ld(x,y).$$ Hence $\Liploc_{d,\rho}(h) \leq L$.
	
	If $J>0$ and $h_n$ is bilipschitz with $\Liploc_{\rho,d_n}(h_n^{-1}) \leq J$ for all $n\in\N$, then, similarly as previously we can show that $h$ is bilipschitz with $\Liploc_{\rho,d} (h^{-1}) \leq J$. 
	
	Now assume the $h_n$ are surjective. Pick $y\in M$, and let $\epsilon > 0$. Choose $i\in\N$ such that $\nm{h-h_{k_i}}_\infty < \epsilon$. If $h_{k_i}(x)=y$ then $|h(x) - y| < \epsilon$. As $\epsilon$ was arbitrary, $y$ is a limit point of $h(K)$. Since $h(K)$ is compact, $y\in h(K)$ and $h$ is surjective.
\end{proof}

\begin{proposition}\label{freespacesplit}
	Let $d\in \cM,\medspace x_0$ be the base point of $(K,d)$ and $\{K_1,\ldots,K_n\}$ be a partition of $K$ consisting of clopen sets. If $K_i' = K_i\cup \{x_0\}$ for $i=1,\ldots,n$ then $$\free{K,d} \simeq \free{K_1',d} \oplus_1 \ldots \oplus_1 \free{K_n',d}.$$
\end{proposition}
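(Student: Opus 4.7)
My plan is to exhibit explicit bounded linear operators in both directions and verify they are mutual inverses on the generators.

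First, since $K$ is compact and the $K_i$ are pairwise disjoint clopen sets, the number $\eta = \min_{i\neq j} d(K_i, K_j)$ is strictly positive. For each $i=1,\ldots,n$, I would define a retraction $r_i\colon K \to K_i'$ by $r_i(x) = x$ if $x\in K_i$ and $r_i(x) = x_0$ otherwise. This map is Lipschitz: if $x,y$ are both in $K_i$ or both outside $K_i$ the verification is immediate, while for $x\in K_i$ and $y\in K\setminus K_i$ we have $\tildem d(r_i(x), r_i(y)) = d(x,x_0) \leq \diam_d(K) \leq \frac{\diam_d(K)}{\eta} d(x,y)$. Thus $\Liploc_d(r_i) \leq L := \max(1, \diam_d(K)/\eta)$. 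Note also that $r_i(x_0) = x_0$, so $r_i$ is a base-point-preserving Lipschitz map.

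Next, I would use functoriality of $\free{\cdot}$. The inclusion $\iota_i\colon K_i'\hookrightarrow K$ is a 1-Lipschitz base-point preserving map, inducing a contraction $\hat{\iota}_i\colon \free{K_i',d}\to\free{K,d}$; and each $r_i$ induces a bounded operator $\hat r_i\colon \free{K,d}\to\free{K_i',d}$ with $\|\hat r_i\| \leq L$. Now define
\[
\Psi\colon \bigoplus_{i=1}^n{}_1 \free{K_i',d} \to \free{K,d}, \quad \Psi(\mu_1,\ldots,\mu_n) = \sum_{i=1}^n \hat\iota_i(\mu_i),
\]
and
\[
\Phi\colon \free{K,d} \to \bigoplus_{i=1}^n{}_1 \free{K_i',d}, \quad \Phi(\mu) = (\hat r_1\mu,\ldots, \hat r_n \mu).
\]
Then $\|\Psi\|\leq 1$ by the triangle inequality for the $\ell_1$-sum, and $\|\Phi\|\leq nL$, so both are bounded.

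To check they are mutually inverse, I would verify the identities on the dense set of evaluation functionals. For $x\in K$, $x$ belongs to exactly one $K_j$, and then $\hat r_i(\delta_x - \delta_{x_0}) = \delta_{r_i(x)} - \delta_{x_0}$ equals $\delta_x - \delta_{x_0}$ if $i=j$ and is $0$ otherwise; applying $\Psi$ then returns $\delta_x - \delta_{x_0}$. Conversely, for $\mu_i$ in $\free{K_i',d}$ and $j\neq i$, the composition $\hat r_j \hat\iota_i$ sends each generator $\delta_x - \delta_{x_0}$ (with $x\in K_i'$) to $0$, since $r_j$ maps all of $K_i'$ to $x_0$; and $\hat r_i \hat\iota_i$ is the identity on $\free{K_i',d}$ because $r_i\circ\iota_i = \text{id}_{K_i'}$. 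Hence $\Phi\Psi$ is the identity on each summand and $\Psi\Phi = \text{id}$ on all generators.

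The only genuine input in the argument is the existence of the Lipschitz retractions, which in turn rests on the uniform separation $\eta>0$ between distinct pieces of the clopen partition — a consequence of compactness. Everything else is a formal functorial verification on the generators, so no step should present serious difficulty.
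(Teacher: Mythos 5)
Your argument is correct, and it takes a genuinely different route from the paper's. The paper works on the dual side: it defines the restriction operator $T\colon\Lipo{K,d}\to\Lipo{K_1',d}\oplus_\infty\ldots\oplus_\infty\Lipo{K_n',d}$, $T(f)=(f|_{K_1'},\ldots,f|_{K_n'})$, shows it is bounded below using the same separation constant $b=\min_{i\neq j}d(K_i,K_j)>0$ that you call $\eta$, and then identifies the desired isomorphism as the preadjoint $T_*(\nu_1,\ldots,\nu_n)=\nu_1+\ldots+\nu_n$ --- which is exactly your $\Psi$. You instead stay on the free-space side throughout and exhibit the inverse $\Phi$ explicitly via the linearizations of the Lipschitz retractions $r_i$, checking the two compositions on generators. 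What your approach buys is that invertibility is verified directly by an explicit two-sided inverse, with no appeal to adjoints or to the (implicit in the paper) surjectivity of $T$; what the paper's approach buys is slightly less machinery (no need to invoke functoriality of $\mathcal F$ for non-injective maps) and a cleaner norm estimate, since bounding $\|T^{-1}\|$ is a one-line computation with functions, whereas your bound $\|\Phi\|\le nL$ is obtained by summing the retraction constants. Both proofs hinge on the same single geometric input, the positive separation of the clopen pieces. Two cosmetic points: the $\tildem d$ in your Lipschitz estimate for $r_i$ should just be $d$, and since $\delta_{x_0}=0$ in $\free{K,d}$ you could write the generators simply as $\delta_x$.
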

\begin{proof}
	Let $x_0$ be the base point of each $K_i'$ as well. We define the bounded linear operator $T\colon\Lipo{K,\tildem d} \to \Lipo{K_1',d} \oplus_\infty \ldots \oplus_\infty \Lipo{K_n',d}$ by $$T(f) = (f|_{K_1'}, \ldots, f|_{K_n'}).$$ Pick $x,y\in K, x\not=y$. If $x,y\in K_i$ for some $i$ then clearly $\frac{|f(x)-f(y)|}{d(x,y)} \leq \nm{T(f)}$ for each $f\in\Lipo{K,d}$. Otherwise, $d(x,y)>\min_{i\not=j} d(K_i,K_j) =: b$, so $$\frac{|f(x)-f(y)|}{d(x,y)} \leq b^{-1}(|f(x)|+|f(y)|) \leq 2b^{-1} D(K)\nm{T(f)}.$$ Therefore $\nm{f} \leq \max(1,2b^{-1} D(K))\nm{T(f)}$ for each $f\in\Lipo{K,d}$, which shows that $T$ is an isomorphism. It is not hard to see that $T$ is the adjoint of the operator $T_*\colon \free{K_1',d} \oplus_1 \ldots \oplus_1 \free{K_n',d} \to \free{K,d}$, $$T_*(\nu_1,\ldots,\nu_n) = \nu_1 + \ldots + \nu_n,$$ (the spaces $\free{K_i',d})$ are seen as subspaces of $\free{K,d}$, by \cite{weaver}*{Theorem 3.7}). Therefore $T_*$ is the required isomorphism.
\end{proof}

\begin{theorem}[Grothendieck]\label{thm:mapgrothendieck}
	If $X$ is a separable dual Banach space with the AP then $X$ has the MAP.
\end{theorem}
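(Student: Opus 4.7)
The plan is to follow Grothendieck's classical duality argument, which combines the dual structure of $X$ with Hahn-Banach separation and trace-duality techniques for the approximation property.

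First I would reduce to the case of a separable predual: writing $X = Y^*$, the separability of $X$ forces $Y$ to be separable, by picking for a countable dense set $\{x_n\} \subseteq X$ norming vectors $y_n \in B_Y$ with $\langle y_n, x_n\rangle \geq \|x_n\|/2$, whose closed linear span coincides with $Y$ by Hahn-Banach. The goal is then to produce, for every compact $K \subseteq X$ and every $\epsilon > 0$, a finite-rank operator $T \colon X \to X$ with $\|T\| \leq 1$ and $\sup_{x \in K}\|Tx - x\| < \epsilon$.

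The main idea is to work with weak*-continuous finite-rank operators on $X$, which are precisely the adjoints $T = S^*$ of finite-rank operators $S \colon Y \to Y$ (and satisfy $\|T\| = \|S\|$). The AP of $X$ produces a finite-rank approximant of $I_X$ on $K$ whose coefficient functionals lie in $X^* = Y^{**}$; by Goldstine's theorem (or Helly's theorem applied to a sufficiently fine $\epsilon$-net inside $K$) these can be perturbed into elements of $Y$, yielding a weak*-continuous finite-rank approximant, at this stage without norm control. Next, equip $L(X)$ with the locally convex topology $\tau$ of pointwise weak*-convergence, generated by the seminorms $T \mapsto |\langle y, Tx\rangle|$ for $x \in X$, $y \in Y$. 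The $\tau$-continuous linear functionals on $L(X)$ are exactly finite sums $T \mapsto \sum_i\langle y_i, Tx_i\rangle$ with $x_i \in X$, $y_i \in Y$. Letting $\mathcal{C} \subseteq L(X)$ denote the convex set of weak*-continuous finite-rank operators of norm at most $1$, if $I_X$ did not lie in $\overline{\mathcal{C}}^{\tau}$ then Hahn-Banach would furnish vectors $(x_i, y_i)_{i=1}^n$ with
\[
\sup_{T \in \mathcal{C}} \sum_i \langle y_i, T x_i\rangle < \sum_i \langle y_i, x_i\rangle.
\]
Writing $T = S^*$ with $\|S\| \leq 1$ finite rank, the supremum becomes $\sup_{S}\operatorname{tr}(S\widetilde{u})$ where $\widetilde{u} \colon Y \to Y$ is the finite-rank operator $y \mapsto \sum_i x_i(y) y_i$. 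Trace-duality (computed using the AP of $X$, which transfers to AP of $Y$ by a classical result) identifies this supremum with the nuclear norm $\|\widetilde{u}\|_{N(Y)}$ and forces it to match $|\operatorname{tr}(\widetilde{u})| = |\sum_i\langle y_i, x_i\rangle|$, contradicting the strict separation.

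The main obstacle is the trace-duality step: carefully identifying the Hahn-Banach supremum with the relevant tensor/nuclear norm, and using the AP (in its tensor-product formulation, namely injectivity of the canonical map $Y^* \widehat{\otimes}_\pi Y \to L(Y)$) to produce the contradiction. One then upgrades the resulting $\tau$-approximation by $\mathcal{C}$ into norm-approximation uniform on $K$: on norm-compact subsets of $X$ the weak*- and norm-topologies agree, so boundedness of $\mathcal{C}$ combined with Mazur's theorem and a diagonal argument exploiting the separability of $X$ promotes $\tau$-convergence to the required compact-open convergence. This yields the MAP, with AP entering essentially only in the trace-duality step.
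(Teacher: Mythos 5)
The paper does not actually prove this statement -- it quotes it and cites Lindenstrauss--Tzafriri, Theorem 1.e.15 -- so your sketch must stand on its own, and as written it has two genuine gaps. The first is the choice of topology. Since $\langle y, S^*x\rangle=\langle Sy,x\rangle$ for a finite-rank $S\colon Y\to Y$, the assertion $I_X\in\overline{\mathcal{C}}^{\tau}$ is exactly the assertion that $I_Y$ lies in the weak-operator-topology closure of the norm-one finite-rank operators on $Y$, i.e.\ (by Mazur's theorem, which is legitimate there) that the \emph{predual} $Y$ has the MAP; it says nothing directly about $X$. Your closing upgrade does not repair this: Mazur converts \emph{weak} convergence into norm convergence of convex combinations, whereas $\tau$-convergence of $Tx$ to $x$ is weak$^*$ convergence in $X=Y^*$, and a bounded convex set can have weak$^*$ closure strictly larger than its norm closure (in $\ell_1=c_0^*$ the convex set $\{x\in B_{\ell_1}:\sum_n x_n=1\}$ contains every $e_n$, hence has $0$ in its weak$^*$ closure but not in its norm closure). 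The remark that the weak$^*$ and norm topologies agree on norm-compact subsets of $X$ is true but irrelevant, because the orbit sets $\{Tx: T\in\mathcal{C}\}$ are not norm-compact; and since the MAP does not pass from $Y$ to $Y^*$, even a complete proof of $I_X\in\overline{\mathcal{C}}^{\tau}$ would not yield the theorem.

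The second gap is the trace-duality step, which conceals the entire content. Because the $\tau$-continuous functionals are finite sums, your separation produces only a finite-rank tensor $\widetilde u$, and the inequality you need to reach a contradiction, namely $\operatorname{tr}(\widetilde u)\le\sup\{\operatorname{tr}(S\widetilde u): S\in F(Y),\ \|S\|\le 1\}$ for every finite-rank $\widetilde u$, is -- by the bipolar theorem, Mazur, and the usual $\varepsilon$-net argument -- \emph{equivalent} to the MAP of $Y$; it is not a consequence of the AP. The supremum is by definition the norm of $\widetilde u$ as a functional on the finite-rank operators with the operator norm (an integral-type norm); equating it with the nuclear norm $\|\widetilde u\|_{N(Y)}$, let alone with $|\operatorname{tr}\widetilde u|$, is precisely where the separable-dual (Radon--Nikod\'ym) structure has to enter, and your sketch never uses that structure at this point (separability appears only in the flawed upgrade step). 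A sanity check: an argument of this shape requiring only the AP would show that every separable space with the AP has the MAP, contradicting the Figiel--Johnson example. In the classical proof the separation is performed for the topology of uniform convergence on compact sets, whose dual consists of the infinite sums $T\mapsto\sum_n x_n^*(Tx_n)$ with $\sum_n\|x_n^*\|\|x_n\|<\infty$; the real work -- using the AP of $X$ to make the trace well defined on $X^*\widehat{\otimes}_\pi X$ and the RNP of the separable dual $X$ to pass from integral to nuclear with the same norm -- is to show that every such functional satisfies $\varphi(I_X)\le\sup\{\varphi(T): T\in F(X),\ \|T\|\le 1\}$. That is the step your proposal replaces with an unproved identity.
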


A proof of the previous theorem can be found in e.g.~\cite{lindenstrausstzafriri}*{Theorem 1.e.15}. Finally, in this section we give a useful criterion, due to Godefroy, for the $\lambda$-BAP of free spaces over compact metric spaces in terms of `almost-extension' operators \cite{godefroysurvey}*{Theorem 3.2} (also \cite{godefroy:15}*{Theorem 1}). We will need one of the several equivalent conditions for the $\lambda$-BAP stated in \cite{godefroysurvey}*{Theorem 3.2}. For a finite subset $M'\subseteq M$ of a compact metric space $M$, and $\epsilon > 0$, we say that $M'$ is $\epsilon$-dense in $M$ if for every $y\in M$ there is an $x\in M'$ such that $d(y,x) < \epsilon$. Note that if $(M_n)_n$ is an increasing sequence (with respect to inclusion) of finite subsets of $M$ such that $\bigcup_{n=1}^\infty M_n$ is dense in $M$, then there exists a sequence $(\epsilon_n)_n$ of positive numbers tending to $0$ such that $M_n$ is $\epsilon_n$-dense in $M$ for all $n\in\N$. 

\begin{theorem}\label{thm:godefroycriterion}
	Let $M$ be a compact metric space and $(M_n)_n$ be a sequence of finite $\epsilon_n$-dense subsets of $M$ with $\lim_{n\to\infty} \epsilon_n = 0$. Then $\free{M}$ has the $\lambda$-BAP if and only if there is a sequence $(T_n)_n$ of operators $T_n\colon\Lipo{M_n}\to\Lipo{M}$ such that $\nm{T_n} \leq \lambda$ for all $n$ and $$\lim_{n\to\infty} \sup_{f\in B_{\Lipo{M_n}}} \nm{T_n(f)|_{M_n}-f}_\infty = 0.$$
\end{theorem}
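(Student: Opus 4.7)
The plan is to pass between the Lipschitz and free-space sides via a duality construction and verify each direction. For an operator $T_n\colon\Lipo{M_n}\to\Lipo{M}$, I associate the ``pre-adjoint'' $S_n\colon\free{M}\to\free{M_n}\subseteq\free{M}$ obtained by composing the canonical isometric embedding $\free{M}\hookrightarrow\Lipo{M}^{*}$ with the adjoint $T_n^{*}\colon\Lipo{M}^{*}\to\Lipo{M_n}^{*}=\free{M_n}$ (the last identification uses that $\free{M_n}$ is finite-dimensional). Then $\|S_n\|\le\|T_n\|$, and unwinding the definitions yields the key identity
\[
\langle S_n\delta_x,f\rangle = T_n(f|_{M_n})(x),\qquad x\in M,\ f\in\Lipo{M}.
\]

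For the ``if'' direction, given $\|T_n\|\le\lambda$ and $\alpha_n:=\sup_{g\in B_{\Lipo{M_n}}}\|T_n(g)|_{M_n}-g\|_\infty\to 0$, the identity above gives
\[
\|S_n\delta_x-\delta_x\|_{\free{M}} = \sup_{f\in B_{\Lipo{M}}}\bigl|T_n(f|_{M_n})(x)-f(x)\bigr|.
\]
For any $x\in M$, I pick $y\in M_n$ with $d(x,y)<\epsilon_n$ and telescope through $T_n(f|_{M_n})(y)$: the three terms are bounded by $\lambda d(x,y)$ (Lipschitz constant of $T_n(f|_{M_n})$), $\alpha_n$ (the hypothesis applied at $y\in M_n$), and $d(x,y)$ (Lipschitz constant of $f$) respectively, whence $\|S_n\delta_x-\delta_x\|_{\free M}\le(\lambda+1)\epsilon_n+\alpha_n\to 0$ uniformly in $x$. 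Combined with $\|S_n\|\le\lambda$ and density of the linear span of $\{\delta_x:x\in M\}$ in $\free{M}$, this produces strong convergence $S_n\to I$, establishing the $\lambda$-BAP.

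For the ``only if'' direction, I start with $\lambda$-BAP witnesses $R_k\colon\free{M}\to\free{M}$, $\|R_k\|\le\lambda$, $R_k\to I$ strongly. Strong convergence plus the uniform norm bound upgrades to uniform convergence on the compact set $\{\delta_x:x\in M\}\subseteq\free{M}$ (equicontinuity together with pointwise convergence on a compact set), so $\sup_{x\in M}\|R_k\delta_x-\delta_x\|\to 0$. By duality,
\[
\sup_{f\in B_{\Lipo{M}}}\|R_k^{*}f-f\|_\infty \le \sup_{x\in M}\|R_k\delta_x-\delta_x\|\to 0.
\]
Using the density of $\bigcup_n\free{M_n}$ in $\free{M}$ combined with an Auerbach basis of $\mathrm{ran}(R_k)$, I perturb $R_k$ into $\widetilde R_k$ with range in $\free{M_{n_k}}$ for a subsequence $(n_k)$, preserving the strong convergence up to a $1+o(1)$ factor in norm. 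Factoring $\widetilde R_k=i_{n_k}V_k$ and setting $T_{n_k}:=V_k^{*}\colon\Lipo{M_{n_k}}\to\Lipo{M}$, for $g\in B_{\Lipo{M_{n_k}}}$ with any McShane extension $\hat g\in B_{\Lipo{M}}$ we have $T_{n_k}(g)=R_k^{*}(\hat g)$, so restricting to $M_{n_k}$ and invoking the uniform bound on $R_k^{*}-I$ yields the required decay. For $n$ outside the subsequence I define $T_n$ by composing $T_{n_k}$ (with $n_k\ge n$) with a bounded linear extension $\Lipo{M_n}\to\Lipo{M_{n_k}}$, which exists because the kernel of restriction is finite-codimensional in $\Lipo{M_{n_k}}$ hence complemented.

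The main obstacle is norm control in the necessity direction. Both the perturbation producing $\widetilde R_k$ and the linear extensions defining $T_n$ outside the subsequence naturally inflate the operator norm by a factor $1+o(1)$. Recovering the exact bound $\|T_n\|\le\lambda$ requires a weak*-compactness argument on the bounded set $\{T\colon\Lipo{M_n}\to\Lipo{M}:\|T\|\le\lambda(1+\epsilon)\}$, which is compact in the pointwise weak*-topology since $\Lipo{M_n}$ is finite-dimensional and $\Lipo{M}$ is a dual space, combined with a diagonal extraction that absorbs the $o(1)$ into a limiting operator still satisfying the uniform error estimate.
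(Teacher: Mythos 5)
The paper does not prove this statement itself (it is quoted from Godefroy's survey and \cite{godefroy:15}), so your attempt can only be measured against the standard argument. Your ``if'' direction is correct and is essentially that argument: dualizing $T_n$ through $\Lipo{M_n}^{*}=\free{M_n}\subseteq\free{M}$ gives finite-rank $S_n$ with $\nm{S_n}\le\lambda$, and the three-term estimate $\nm{S_n\delta_x-\delta_x}\le(\lambda+1)\epsilon_n+\alpha_n$ yields $S_n\to I$ strongly (modulo the harmless convention that the base point lies in each $M_n$). The overall strategy for the ``only if'' direction --- perturb the $\lambda$-BAP operators $R_k$ so that their ranges lie in some $\free{M_{n_k}}$ and take adjoints of the factorizations through $\free{M_{n_k}}$ --- is also the right one.

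The genuine gap is in how you handle the indices $n$ outside the subsequence $(n_k)$. You define $T_n$ by composing $T_{n_k}$ with ``a bounded linear extension $\Lipo{M_n}\to\Lipo{M_{n_k}}$''; first, this presupposes $M_n\subseteq M_{n_k}$, which the statement does not assume, and second, and more seriously, there is no norm control whatsoever on such extension operators: they exist because everything is finite-dimensional, but their norms cannot in general be taken $1+o(1)$, or even uniformly bounded --- if they could, every $\free{M}$ over a compact $M$ would have the BAP, contradicting the examples of Godefroy--Ozawa and \cite{hajeklancienpernecka} that the present paper relies on. Consequently neither $\nm{T_n}\le\lambda(1+o(1))$ nor $\sup_{g\in B_{\Lipo{M_n}}}\nm{T_n(g)|_{M_n}-g}_\infty\to0$ follows for those $n$, and your final pointwise-weak* compactness step cannot absorb an uncontrolled factor (it only handles a $1+o(1)$ inflation, which is all the Auerbach perturbation itself produces). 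The repair is to dispense with the subsequence: for a fixed $R_k$ written through an Auerbach basis of its range, the $\epsilon_n$-density of $M_n$ lets you replace the finitely many range vectors by elements of $\free{M_n}$ with error at most $C(k)\,\epsilon_n$, so the same perturbation works for \emph{every} sufficiently large $n$ with norm $\le\lambda+o(1)$ and error $o(1)$; a diagonal choice $k(n)\to\infty$ then covers all $n$, and either rescaling by $\lambda/(\lambda+o(1))$ (which changes the sup-norm error only by $o(1)\cdot\diam(M)$, $M$ being compact) or your weak*-limit argument recovers the exact bound $\nm{T_n}\le\lambda$.
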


\section{Topological properties of \texorpdfstring{$\cA^\lambda$, $\cA_f$ and $\cP$}{A lambda, A f and P}}\label{section_top_prop}

In this section we give our main results. We will first prove that $\cA^\lambda$ is an $F_{\sigma\delta}$ set in $\cM$ for any $\lambda \geq 1$. Fix a dense sequence of distinct elements $(x_n)_{n=1}^\infty$ in $K$. For $n\in\N$, define $A_n = \{x_1,\ldots,x_n\}$, and for $n\in\N, \lambda\geq 1$ and $\epsilon > 0$, define
\begin{align*}
	\cB _{n,\epsilon}^\lambda = \{d\in \cM \; :\;\; & \text{there exists } T\colon \Lipo{A_n,d}\to\Lipo{K,d} \text{ such that} \\ & \nm{T} \leq \lambda \text{ and } \sup\nolimits_{f\in B_{\Lipo{A_n,d}}} \nm{(Tf)|_{A_n} - f}_\infty \leq \epsilon \}.
\end{align*}

\begin{proposition}\label{setbapclosed}
	The set $\cB _{n,\epsilon}^\lambda$ is closed in $\cM$ for each $n\in\N, \epsilon > 0$ and $\lambda \geq 1$.
\end{proposition}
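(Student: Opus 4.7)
The plan is to show that $\cB_{n,\epsilon}^\lambda$ is sequentially closed. Let $(d_m)_m \subseteq \cB_{n,\epsilon}^\lambda$ with $d_m \to d \in \cM$ uniformly. For each $m$, choose an operator $T_m\colon \Lipo{A_n, d_m} \to \Lipo{K, d_m}$ with $\nm{T_m} \leq \lambda$ and $\sup_{f\in B_{\Lipo{A_n,d_m}}} \nm{(T_mf)|_{A_n} - f}_\infty \leq \epsilon$. I will extract a subsequence of $(T_m)_m$ that converges in a suitable sense, and take its limit to be the desired operator witnessing $d\in\cB_{n,\epsilon}^\lambda$.

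The key observation is that, upon fixing $x_1$ as the base point, the underlying vector space of $\Lipo{A_n, d_m}$ is the same for every $m$ (only the Lipschitz norm changes): it is the finite-dimensional space of functions on $A_n$ vanishing at $x_1$. Let $\phi_2,\ldots,\phi_n$ be the basis defined by $\phi_j(x_i) = \delta_{ij}$. Since $d_m\to d$ uniformly and $x_1,\ldots,x_n$ are distinct, the quantities $\min_{i\neq j}d_m(x_i,x_j)$ converge to a positive number, so the norms $\nm{\phi_j}_{\Lipo{A_n,d_m}} = 1/\min_{i\neq j}d_m(x_i,x_j)$ are bounded in $m$. Hence $\Liploc_{d_m}(T_m(\phi_j)) \leq \lambda\nm{\phi_j}_{\Lipo{A_n,d_m}}$ is bounded, and since each $T_m(\phi_j)$ vanishes at a base point of $K$ and the $d_m$-diameters of $K$ converge, the functions $T_m(\phi_j)$ take values in a fixed compact interval $[-B_j, B_j] \subseteq \R$. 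Applying \Cref{lm:diagonallipschitz} in turn for $j = 2, \ldots, n$, I extract a single subsequence $(T_{m_k})_k$ and functions $g_2, \ldots, g_n \in \Lipo{K,d}$ such that $T_{m_k}(\phi_j) \to g_j$ uniformly for each $j$.

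Define $T\colon\Lipo{A_n,d}\to\Lipo{K,d}$ linearly by $T(\phi_j) = g_j$. For any $f = \sum_j c_j\phi_j$ we have $T_{m_k}(f)\to T(f)$ uniformly, and the norms $\nm{f}_{\Lipo{A_n,d_{m_k}}}$ converge to $\nm{f}_{\Lipo{A_n,d}}$ (since $A_n$ is finite and the pairwise $d_{m_k}$-distances converge to the pairwise $d$-distances). Passing to the limit in $|T_{m_k}(f)(x)-T_{m_k}(f)(y)| \leq \lambda \nm{f}_{\Lipo{A_n,d_{m_k}}}d_{m_k}(x,y)$ for $x,y\in K$ yields $\nm{Tf}_{\Lipo{K,d}} \leq \lambda\nm{f}_{\Lipo{A_n,d}}$, so $\nm{T}\leq\lambda$. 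Similarly, the (scaled) almost-extension inequality $|(T_{m_k}f)(x_i)-f(x_i)| \leq \epsilon\nm{f}_{\Lipo{A_n,d_{m_k}}}$ passes to $|(Tf)(x_i)-f(x_i)|\leq\epsilon\nm{f}_{\Lipo{A_n,d}}$, giving the required bound on the unit ball $B_{\Lipo{A_n,d}}$.

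The main obstacle is reconciling the different norms on $\Lipo{A_n, d_m}$ and $\Lipo{K, d_m}$ as $m$ varies, but this is handled by noting that the underlying vector space is independent of the metric and by exploiting uniform convergence $d_m\to d$ to control all relevant norms and Lipschitz constants in the limit. \Cref{lm:diagonallipschitz} is precisely the tool that extracts the pointwise limit of the $T_m(\phi_j)$ while preserving the Lipschitz bound in the varying metric, which is why it appears to be custom-built for this argument.
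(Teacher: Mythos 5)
Your proof is correct, but it takes a genuinely different and more direct route than the paper's. The paper proceeds in two compactness stages: for each $m>n$ it first extracts (by finite-dimensional compactness) a limit operator $P_m\colon\Lipo{A_n,d}\to\Lipo{A_m,d}$ of the restrictions $T_k(\cdot)|_{A_m}$, then extends the $P_m(f_i)$ to $K$ by McShane's theorem and uses $w^*$-compactness and $w^*$-metrisability of $\lambda B_{\Lipo{K,d}}$ (together with the identification of the $w^*$-topology with pointwise convergence on bounded sets) to pass to a limit in $m$; the Lipschitz bound for the limit is then recovered from the density of $(x_p)_p$. You instead exploit the fact that the domain is a fixed finite-dimensional vector space, apply \Cref{lm:diagonallipschitz} directly to the images $T_m(\phi_j)$ of a basis (having checked the uniform Lipschitz and uniform boundedness hypotheses, which you do correctly via the convergence of $\min_{i\neq j}d_m(x_i,x_j)$ and of the diameters), and pass to the limit in the defining inequalities. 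This avoids both the intermediate operators $P_m$ and the $w^*$-compactness machinery, and it puts \Cref{lm:diagonallipschitz} to work in a place where the paper, somewhat surprisingly, does not use it. The only points worth making explicit in a final write-up are that $g_j(x_0)=0$ (so that $T$ indeed maps into $\Lipo{K,d}$), which follows from pointwise convergence, and that the almost-extension condition on the unit ball is equivalent by homogeneity to the scaled inequality $\nm{(Tf)|_{A_n}-f}_\infty\leq\epsilon\nm{f}_{\Lipo{A_n,d}}$ for all $f$, which is the form that passes to the limit; both are routine.
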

\begin{proof}
	Fix some $n\in\N, \epsilon > 0$ and $\lambda \geq 1$. Let $(d_k)_k$ be a sequence in $\cB _{n,\epsilon}^\lambda$ converging to $d\in\cM$. For each $k\in\N$, pick some $T_k\colon\Lipo{A_n,d_k}\to\Lipo{K,d_k}$ such that $\nm{T_k} \leq \lambda$ and $$\sup_{f\in B_{\Lipo{A_n,d}}} \nm{(T_kf)|_{A_n} - f}_\infty \leq \epsilon.$$ 
	
	Fix an $m\in\N, m > n$. We consider the sequence $(S_k)_{k=1}^\infty$ where $S_k\colon \Lipo{A_n, d}\to\Lipo{A_m, d}$ is defined by $S_k(f) = T_k(f)|_{A_m}$. As $d_k\to d$ uniformly and $A_m$ is finite, it is not hard to see that $(\nm{S_k})_{k=1}^\infty$ is a bounded sequence. Therefore, by compactness, $(S_k)_k$ has a subsequence $(S_{k_j})_j$ which converges to an operator $P_m\colon \Lipo{A_n, d}\to\Lipo{A_m, d}$. For any $f\in\Lipo{A_n,d}$, we have $\Liploc_{d} (f) = \lim_{j\to\infty} \Liploc_{d_{k_j}}(f)$ and for $x,y\in A_m$,
	\begin{align*}
		|P_m(f)(x) &- P_m(f)(y)| = \lim_{j\to\infty} |S_{k_j}(f)(x) - S_{k_j}(f)(y)| \\&= \lim_{j\to\infty} |T_{k_j}(f)(x) - T_{k_j}(f)(y)| \leq \limsup_{j\to\infty} \Liploc_{d_{k_j}}(T_{k_j}(f)) d_{k_j}(x,y) \\&\leq \limsup_{j\to\infty} \nm{T_{k_j}}\Liploc_{d_{k_j}}(f) d_{k_j}(x,y) \leq \lambda \Liploc_d(f) d(x,y).
	\end{align*}
	Therefore $\nm{P_m}\leq \lambda$. It is also clear that $$\sup_{f\in B_{\Lipo{A_n,d}}} \nm{(P_m(f))|_{A_n} - f}_\infty \leq \epsilon.$$
	
	Now fix a basis $f_1,\ldots,f_{n-1}$ of $\Lipo{A_n,d}$ with $\nm{f_i} = 1$ for all $i\in\{1,\ldots,n-1\}$. For each $i$ and $m>n$ extend the function $P_m(f_i)$ to $(K,d)$ without increasing its Lipschitz constant, by McShane's extension theorem \cite{weaver}*{Theorem 1.33}. Now fix an $i\in\{1,\ldots,n-1\}$ and consider the sequence $(P_m(f_i))_{m=n+1}^\infty \subseteq \lambda B_{\Lipo{K,d}}$. As $\lambda B_{\Lipo{K,d}}$ is $w^*$-compact and $w^*$-metrisable (since $\free{K,d}$ is separable), $(P_m(f_i))_{m=n+1}^\infty$ has a $w^*$-convergent subsequence. As the $w^*$-topology on bounded subsets of $\Lipo{K,d}$ coincides with the topology of pointwise convergence \cite{weaver}*{Theorem 2.37}, we can obtain a subsequence $(P_{m_j})_{j=1}^\infty$ of $(P_m)_{m=n+1}^\infty$ such that $P_{m_j}(f_i)$ converges pointwise to some $T(f_i) \in \lambda B_{\Lipo{K,d}}$ for all $i\in\{1,\ldots,n-1\}$. We extend $T$ by linearity to $\Lipo{A_n,d}$.
	
	For each $p\in\N$ and each $i=\{1,\ldots,n-1\}$, we have $$T(f_i)(x_p) = \lim_{j\to\infty} P_{m_j}(f_i)(x_p).$$ Therefore, by linearity, $T(f)(x_p) = \lim_{j\to\infty} P_{m_j}(f)(x_p)$ for each $f\in\Lipo{A_n,d}$. Now let $p,l \in \N$, $p<l$. Then $x_p,x_l \in A_{m_j}$ whenever $m_j \geq l$, hence
	\[
	|(Tf)(x_p)-(Tf)(x_l)| = \lim_{j \to \infty} |(P_{m_j}f)(x_p)-(P_{m_j}f)(x_l)| \leq \lambda \Liploc_d(f) d(x_p,x_l).
	\]
	From the fact that $(x_p)_{p=1}^\infty$ is dense in $K$ follows that $\Liploc_d(T(f)) \leq \lambda\Liploc_d(f)$, and so $\nm{T}\leq \lambda$. Also, it is not hard to see that $$\sup_{f\in B_{\Lipo{A_n,d}}} \nm{(T(f))|_{A_n} - f}_\infty \leq \epsilon.$$ Therefore $d\in \cB _{n,\epsilon}^\lambda$ and $\cB _{n,\epsilon}^\lambda$ is closed.
\end{proof}

\begin{remark}
	In the second part of the previous proof, extending $P_m(f_i)$ by McShane's extension theorem is done for convenience rather than necessity. Alternatively, we can define $Tf$ only on the set $\{x_n: n\in\N\}$ and then extend to $K$ using the density of $\{x_n: n\in\N\}$.
\end{remark}

\begin{proposition}\label{prop:alambdafsigmadelta}
	The set $\cA^\lambda$ is $F_{\sigma\delta}$ in $\cM$ for any $\lambda\geq 1$.
\end{proposition}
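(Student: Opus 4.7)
The plan is to express $\cA^\lambda$ as a countable intersection of $F_\sigma$ sets built from the closed sets $\cB_{n,\epsilon}^\lambda$ of \Cref{setbapclosed}, using Godefroy's criterion (\Cref{thm:godefroycriterion}) as the bridge.

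The first step is to check that \Cref{thm:godefroycriterion} applies to every $d \in \cM$ with the single fixed sequence $(A_n)_n$. Since $\{x_n : n\in\N\}$ is dense in the compact space $(K,d)$, the remark preceding \Cref{thm:godefroycriterion} supplies positive numbers $\epsilon_n(d)\to 0$ such that $A_n$ is $\epsilon_n(d)$-dense in $(K,d)$. The criterion then says that $d \in \cA^\lambda$ if and only if there exist operators $T_n\colon \Lipo{A_n,d}\to\Lipo{K,d}$ with $\nm{T_n}\le\lambda$ and $\sup_{f\in B_{\Lipo{A_n,d}}}\nm{(T_nf)|_{A_n}-f}_\infty \to 0$.

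The second step is to translate this condition into the language of the sets $\cB_{n,\epsilon}^\lambda$: the existence of such a sequence is equivalent to saying that for every $k\in\N$ there exists $N\in\N$ with $d\in \cB_{n,1/k}^\lambda$ for all $n\ge N$. The forward direction is immediate from the definition. For the converse I would use a diagonal construction: assuming the condition, pick $N_1<N_2<\ldots$ such that $d\in \cB_{n,1/k}^\lambda$ whenever $n\ge N_k$, then for each $n\in[N_k,N_{k+1})$ let $T_n$ be any operator witnessing $d\in \cB_{n,1/k}^\lambda$, and set $T_n=0$ for $n<N_1$. The resulting sequence has norms bounded by $\lambda$ and errors at most $1/k$ on $[N_k,N_{k+1})$, hence the error tends to $0$.

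Combining the two steps yields
\[
\cA^\lambda \;=\; \bigcap_{k=1}^\infty \bigcup_{N=1}^\infty \bigcap_{n=N}^\infty \cB_{n,1/k}^\lambda.
\]
Since every $\cB_{n,1/k}^\lambda$ is closed in $\cM$ by \Cref{setbapclosed}, the inner intersections are closed, the unions over $N$ are $F_\sigma$, and the overall set is $F_{\sigma\delta}$, as required. I do not foresee a real obstacle: the substantive work — the closedness in \Cref{setbapclosed} and Godefroy's characterisation of $\lambda$-BAP — is already in hand, and what remains is the elementary diagonal extraction above.
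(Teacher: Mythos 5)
Your proposal is correct and follows essentially the same route as the paper: both apply Godefroy's criterion with the fixed sets $A_n$, rewrite the existence of a sequence of almost-extension operators with errors tending to $0$ as the quantifier statement ``for every $k$ there is $N$ with $d\in\cB_{n,1/k}^\lambda$ for all large $n$,'' and conclude that $\cA^\lambda=\bigcap_k\bigcup_N\bigcap_{n\geq N}\cB_{n,1/k}^\lambda$ is $F_{\sigma\delta}$ by \Cref{setbapclosed}. The diagonal extraction you spell out for the converse direction is exactly the (implicit) content of the paper's ``this is equivalent to'' step.
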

\begin{proof}
	According to \Cref{thm:godefroycriterion} with $M=K$, $M_n = A_n$, $n\in\N$, $\free{K,d}$ has the $\lambda$-BAP if and only if there exists a sequence $(\alpha_n)_n$ of positive real numbers converging to 0 such that $d\in \cB _{n,\alpha_n}^\lambda$ for all $n\in\N$. This is equivalent to $$\forall k \in \N \; \exists n_0\in \N \; \forall n > n_0, \;\; d\in \cB _{n,k^{-1}}^\lambda,$$ or $$d\in \bigcap_{k\in\N} \bigcup_{n_0\in\N} \bigcap_{n>n_0} \cB _{n,k^{-1}}^\lambda.$$ Now the statement follows from \Cref{setbapclosed}.
\end{proof}

We next prove that $\cA^1$ is residual in $\cM$. For $a\in 2^{<\N}$ (that is, $a$ is a finite sequence of 0s and 1s) and $n\in\N$, let $l(a)$ denote the length of $a$, and define $R_n = \{a \in 2^{<\N} \; :\; l(a) = n\}$, and $C_a = \{x\in K \; :\; x|_{\{1,2,\ldots,l(a)\}} = a\}$. Each $C_a$ is a clopen subset of $K$ with $\diam_\mu (C_a) = 2^{-l(a)-1}$, and, for each $n\in\N$, $K$ is the disjoint union of the sets $\{C_a \; :\; a\in R_n\}$. For each $a\in 2^{<\N}$, let $r_a = (a(1),\ldots,a(l(a)),0,0,\ldots)\in K$. For $d\in\cM$ and $n\in\N$, we define $T_n^d\colon\Lipo{\{r_a : a\in R_n\},d}\to\Lipo{K,d}$ by $T_n^d(f)(x) = f(r_{x|_{\{1,2,\ldots,n\}}})$. Also define $$\chi_n^d := \max_{a,b\in R_n, a\not= b} \frac{D(C_a,C_b)}{d(C_a,C_b)}.$$

\begin{lemma}\label{tndnorm}
	The operator $T_n^d$ satisfies $\nm{T_n^d} \leq \chi_n^d$. 
\end{lemma}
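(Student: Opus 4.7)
The operator $T_n^d$ produces a function $T_n^d(f)$ that is constant on each clopen piece $C_a$ with $a\in R_n$, taking the value $f(r_a)$ there. Consequently $T_n^d(f)$ is automatically continuous on $K$, and to compute its Lipschitz constant it suffices to compare values at points lying in two distinct pieces $C_a,C_b$, since two points inside the same $C_a$ contribute zero to the Lipschitz quotient. The plan is therefore to take an arbitrary $f$ in the unit ball of $\Lipo{\{r_a : a\in R_n\},d}$ and bound $\Liploc_d(T_n^d(f))$ directly from the definition of $\chi_n^d$.

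Concretely, fix distinct $a,b\in R_n$ and pick $x\in C_a$, $y\in C_b$. Since $\Liploc_d(f)\leq 1$ and $r_a\in C_a$, $r_b\in C_b$, I obtain
\[
|T_n^d(f)(x) - T_n^d(f)(y)| = |f(r_a) - f(r_b)| \leq d(r_a,r_b) \leq D(C_a,C_b),
\]
while $d(x,y)\geq d(C_a,C_b)$ directly from the definition of $d(C_a,C_b)$. Dividing and using the definition of $\chi_n^d$ yields
\[
\frac{|T_n^d(f)(x) - T_n^d(f)(y)|}{d(x,y)} \;\leq\; \frac{D(C_a,C_b)}{d(C_a,C_b)} \;\leq\; \chi_n^d.
\]
Combined with the trivial same-piece case this gives $\Liploc_d(T_n^d(f))\leq \chi_n^d$, so that $\nm{T_n^d}\leq \chi_n^d$, as claimed.

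The only piece of bookkeeping is to ensure $T_n^d(f)$ is a bona fide element of $\Lipo{K,d}$, i.e.\ that it vanishes at the base point; this is automatic provided one takes the base points of $K$ and of $\{r_a:a\in R_n\}$ to be the same $r_{a_0}$, which is permitted by base-point independence. I do not foresee a genuine obstacle: the argument reduces to a two-line estimate reading the definitions of $T_n^d$ and $\chi_n^d$ side by side.
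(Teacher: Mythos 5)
Your proof is correct and is essentially identical to the paper's: both handle the same-piece case trivially and, for $x\in C_a$, $y\in C_b$ with $a\neq b$, bound the Lipschitz quotient by $d(r_a,r_b)/d(x,y)\leq D(C_a,C_b)/d(C_a,C_b)\leq\chi_n^d$. The extra remarks on continuity and the base point are fine but not needed beyond what the paper records.
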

\begin{proof}
	Let $f\in \Lipo{\{r_a : a\in R_n\},d}$, $\Lip{f} \leq 1$, and $x,y\in K$. If $x,y\in C_a$ for some $a\in R_n$ then $T_n^d(f)(x) = T_n^d(f)(y)$. If $x\in C_a, y\in C_b$ where $a,b\in R_n, a\not=b$, then $$\frac{|T_n^d(f)(x) - T_n^d(f)(y)|}{d(x,y)} = \frac{|f(r_a) - f(r_b)|}{d(x,y)} \leq \frac{d(r_a,r_b)}{d(x,y)} \leq \frac{D(C_a,C_b)}{d(C_a,C_b)}.$$ Therefore $\nm{T_n^d} \leq \chi_n^d.$
\end{proof}

Define $U_n = \{d\in\cM \; :\; \chi_n^d < 1+\frac{1}{n}\}$ for each $n\in\N$. It is clear that $U_n$ is open in $\cM$.

\begin{proposition}\label{prop:uniondenseopen}
	For each $n_0\in \N$, $\bigcup_{n=n_0}^\infty U_n$ is dense and open in $\cM$.
\end{proposition}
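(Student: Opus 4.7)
The plan is to treat openness and density separately; openness is essentially formal, so the real content is density.

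For openness, I note that for each fixed pair $a,b\in R_n$ with $a\neq b$, the maps $d\mapsto D(C_a,C_b)$ and $d\mapsto d(C_a,C_b)$ are continuous on $\cM$ (being sup and inf of the continuous function $d$ over the compact set $C_a\times C_b$), and $d(C_a,C_b)>0$ throughout $\cM$. Hence $d\mapsto \chi_n^d$ is a maximum of finitely many continuous positive-denominator ratios, so each $U_n$ is open, and thus so is $\bigcup_{n\geq n_0}U_n$.

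For density, I fix $d\in\cM$ and $\epsilon>0$, and aim to produce, for some $n\geq n_0$, a metric $\tildem d\in U_n$ with $\nm{\tildem d-d}_\infty<\epsilon$. Because $d$ and $\mu$ are both compatible metrics on the compact space $K$, they are uniformly equivalent; combined with $\diam_\mu(C_a)=2^{-l(a)-1}$, this lets me pick $n\geq n_0$ so large that $D(C_a)<\epsilon/2$ for every $a\in R_n$. Set
\[
m=\min_{a,b\in R_n,\, a\neq b} d(C_a,C_b),
\]
which is strictly positive as a minimum over finitely many pairs of disjoint compact sets. Next I pick $\delta\in(0,1]$ with $\delta<m/(2n)$.

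I then apply \Cref{lm:mainmetric} with $K'=K$, the partition $\{C_a:a\in R_n\}$ (which satisfies the hypothesis $D(C_a)<\epsilon/2$), any choice of $e_i\in\cM$, and the chosen $\delta$. This delivers $\tildem d\in\cM$ with $\nm{d-\tildem d}_\infty<\epsilon$ and $\tildem D(C_a)\leq \delta$ for every $a\in R_n$. By \Cref{rmk:dtildebigger}, $\tildem d(C_a,C_b)\geq d(C_a,C_b)\geq m$ whenever $a\neq b$. Meanwhile, if $x_0\in C_a,\ y_0\in C_b$ realise $\tildem d(C_a,C_b)$, then for any $x\in C_a,\ y\in C_b$ the triangle inequality gives
\[
\tildem d(x,y)\leq \tildem d(x,x_0)+\tildem d(x_0,y_0)+\tildem d(y_0,y)\leq 2\delta+\tildem d(C_a,C_b),
\]
so $\tildem D(C_a,C_b)\leq \tildem d(C_a,C_b)+2\delta$. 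Combining the two estimates,
\[
\frac{\tildem D(C_a,C_b)}{\tildem d(C_a,C_b)}\leq 1+\frac{2\delta}{\tildem d(C_a,C_b)}\leq 1+\frac{2\delta}{m}<1+\frac{1}{n}
\]
for every pair $a\neq b$ in $R_n$, hence $\chi_n^{\tildem d}<1+1/n$ and $\tildem d\in U_n$, as required.

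There is no serious obstacle here: the argument rests on three simple facts assembled in the right order, namely that compatible metrics on $K$ are uniformly equivalent (so partition-cell diameters shrink uniformly with $n$), that finite collections of disjoint compact sets stay apart at positive distance $m$, and that \Cref{lm:mainmetric} combined with \Cref{rmk:dtildebigger} lets one shrink the within-cell diameters while not decreasing the between-cell distances. The only decision to make is the quantitative choice of $\delta$, which is pinned down by demanding $2\delta/m<1/n$.
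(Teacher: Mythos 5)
Your proof is correct and follows essentially the same route as the paper: shrink the cells $C_a$ to diameter at most $\delta<\frac{1}{2n}\min_{a\neq b}d(C_a,C_b)$ via \Cref{lm:mainmetric}, use \Cref{rmk:dtildebigger} to keep inter-cell distances from decreasing, and bound $\tildem D(C_a,C_b)\leq \tildem d(C_a,C_b)+2\delta$ by the triangle inequality. The only cosmetic differences are that you spell out the (genuinely immaterial) freedom in choosing the $e_i$, where the paper takes $e_i=\mu$, and that you give an explicit continuity argument for the openness of $U_n$, which the paper leaves as clear.
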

\begin{proof}
	Let $n_0\in\N$. Obviously $\bigcup_{n=n_0}^\infty U_n$ is open. Pick $d\in\cM$ and $\epsilon > 0$. By compactness of $K$, we can find $n\geq n_0$ such that $d(x,y) < \frac{\epsilon}{2}$ whenever $\mu(x,y) < 2^{-n}$, $x,y\in K$. We then have $D(C_a) < \frac{\epsilon}{2}$ for all $a\in R_n$ (strict inequality holds by compactness of $C_a$). Pick $\delta \in \left(0,\frac{1}{2n} \min\limits_{a,b\in R_n, a\not= b} d(C_a,C_b)\right)$, and apply \Cref{lm:mainmetric} to $d, \epsilon, \delta$, $K'=K$, the partition $\{C_a : a\in R_n\}$ of $K'$, and $e_i = \mu$ for all $i$. We obtain a metric $\tildem d$ such that $\nm{\tildem d - d}_\infty < \epsilon$ and $\tildem D(C_a) \leq \delta$ for all $a\in R_n$. By \Cref{rmk:dtildebigger}, 
	\begin{align}\label{eq:dcacb2ndelta}
		\tildem d(C_a, C_b) \geq d(C_a,C_b) > 2n\delta,
	\end{align} 
	for $a,b\in R_n, a\not=b$. Fix some $a,b\in R_n, a\not= b$ and let $x_a,y_a\in C_a$ and $x_b,y_b\in C_b$ be such that $\tildem D(C_a,C_b) = \tildem d(x_a,x_b)$ and $\tildem d(C_a,C_b) = \tildem d(y_a,y_b)$. Then 
	\begin{align*}
		\tildem D(C_a,C_b) &= \tildem d(x_a,x_b) \leq \tildem d(x_a,y_a) + \tildem d(y_a,y_b) + \tildem d(y_b,x_b) \\&\leq \tildem D(C_a) + \tildem D(C_b) + \tildem d(C_a,C_b) \leq 2\delta + \tildem d(C_a,C_b) < \left(1+\frac{1}{n}\right)\tildem d(C_a,C_b),
	\end{align*}
	where the last inequality holds by \eqref{eq:dcacb2ndelta}. As $a,b\in R_n, a\not=b$ were arbitrary, we have $\chi_n^{\tildem d} < 1+\frac{1}{n}$, and so $\tildem d\in U_n$.
\end{proof}

\begin{theorem}\label{a1residual}
	The set $\cA^1$ is a residual $F_{\sigma\delta}$ set in $\cM$.
\end{theorem}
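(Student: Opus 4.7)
The plan is to show that $\cA^1$ contains a dense $G_\delta$ subset of $\cM$; combined with \Cref{prop:alambdafsigmadelta} this will give that $\cA^1$ is a residual $F_{\sigma\delta}$ set. By \Cref{prop:uniondenseopen}, each set $V_{n_0} := \bigcup_{n \geq n_0} U_n$ is dense and open in $\cM$, so the Baire category theorem applied to the completely metrisable space $\cM$ yields that
\[
G := \bigcap_{n_0 \in \N} V_{n_0} = \{d \in \cM : d \in U_n \text{ for infinitely many } n \in \N\}
\]
is a dense $G_\delta$ subset of $\cM$. It therefore suffices to prove $G \subseteq \cA^1$.

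Fix $d \in G$, and select a strictly increasing sequence $(n_k)_k$ in $\N$ with $n_k \geq k$ and $\chi_{n_k}^d < 1 + 1/n_k$ for all $k$. Set $\hat M_k := \{r_a : a \in R_{n_k}\}$. Since the diameters $\diam_d(C_a)$ for $a \in R_n$ shrink uniformly to $0$ as $n \to \infty$ (by compactness of $(K,d)$), there exist $\epsilon_k \to 0$ with $\hat M_k$ being $\epsilon_k$-dense in $(K,d)$, so $(\hat M_k)_k$ is a valid sequence for \Cref{thm:godefroycriterion}. By \Cref{tndnorm}, $\nm{T_{n_k}^d} \leq \chi_{n_k}^d < 1 + 1/n_k$, and the very definition of $T_{n_k}^d$ gives $(T_{n_k}^d f)|_{\hat M_k} = f$ for every $f \in \Lipo{\hat M_k, d}$, since $r_a|_{\{1,\ldots,n_k\}} = a$ for each $a \in R_{n_k}$.

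The decisive step is to rescale: put $\hat T_k := (1 + 1/n_k)^{-1} T_{n_k}^d$, so that $\nm{\hat T_k} \leq 1$. For every $f$ in the unit ball of $\Lipo{\hat M_k, d}$ one has $\nm{f}_\infty \leq \diam_d(K)$, hence
\[
\nm{(\hat T_k f)|_{\hat M_k} - f}_\infty \;=\; \frac{1/n_k}{1 + 1/n_k}\, \nm{f}_\infty \;\leq\; \frac{\diam_d(K)}{n_k + 1} \;\to\; 0
\]
as $k \to \infty$. Applying \Cref{thm:godefroycriterion} with $M = K$, the sequence $(\hat M_k)_k$, $\lambda = 1$ and operators $(\hat T_k)_k$ then gives $d \in \cA^1$, completing the argument. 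The subtle point I anticipate making this $\lambda = 1$ argument work (as opposed to merely $\lambda$-BAP for every $\lambda > 1$) is that $T_{n_k}^d$ is an \emph{exact} Lipschitz extension on $\hat M_k$: only because the restriction error starts at zero can one scale the norm strictly down to $1$, absorbing the newly introduced restriction error of order $\nm{f}_\infty / n_k$ into the uniform bound $\nm{f}_\infty \leq \diam_d(K)$.
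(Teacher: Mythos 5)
Your proposal is correct and follows essentially the same route as the paper: it shows the residual set $\bigcap_{n_0}\bigcup_{n\geq n_0} U_n$ lies in $\cA^1$ via \Cref{tndnorm} and Godefroy's criterion (\Cref{thm:godefroycriterion}), with the operators rescaled to have norm at most $1$ (the paper divides by $\nm{T_{n_i}^d}$ rather than $1+1/n_i$, an immaterial difference). Your explicit estimate of the restriction error introduced by the rescaling is exactly the point the paper leaves implicit, so nothing is missing.
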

\begin{proof}
	By \Cref{prop:alambdafsigmadelta} it suffices to prove that $\cA^1$ is residual, and by \Cref{prop:uniondenseopen} it suffices to prove $$\bigcap_{n_0=1}^\infty \bigcup_{n=n_0}^\infty U_n \subseteq \cA^1.$$ If $d$ is in the set on the left-hand side, then there exists an increasing sequence of natural numbers $n_1 < n_2 < \ldots$ such that $d\in U_{n_i}$ for all $i$. Then $\nm{T_{n_i}^d} < 1+\frac{1}{n_i}$ for all $i$, by \Cref{tndnorm}. As the set $\{r_a : a\in 2^{<\N}\}$ is dense in $K$, an application of \Cref{thm:godefroycriterion} with $M=K$,  $M_i = \{r_a : a\in R_{n_i}\}$, and operators $\frac{T_{n_i}^d}{\nm{T_{n_i}^d}}$, $i\in\N$, shows that $d\in \cA^1$.
\end{proof}

\begin{proposition}\label{failapdense}
	The set $\cA_f$ is a dense meager set in $\cM$.
\end{proposition}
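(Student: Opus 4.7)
The plan is to handle the two assertions separately. Meagerness falls out immediately from \Cref{a1residual}; for density, I would transplant a known pathological Cantor metric onto a small clopen piece of $(K,d)$ via \Cref{lm:mainmetric}, and then propagate the failure of AP to the full free space through the decomposition of \Cref{freespacesplit}.

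For meagerness, observe that the $1$-BAP implies the AP, so $\cA^1 \cap \cA_f = \emptyset$ and hence $\cA_f \subseteq \cM \setminus \cA^1$. By \Cref{a1residual}, $\cA^1$ is residual in $\cM$, so its complement is meager, and therefore so is $\cA_f$.

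For density, fix $d \in \cM$ and $\epsilon > 0$. By \cite{hajeklancienpernecka}*{Corollary 2.2} there exists $e \in \cM$ such that $\free{K,e}$ fails the AP. Using continuity of $d$ on the compact set $K^2$ together with $\diam_\mu(C_a) = 2^{-l(a)-1} \to 0$, choose $n \in \N$ large enough that $\diam_d(C_a) < \epsilon/2$ for every $a \in R_n$, and fix one such $a$; then $K' := C_a$ is a nonempty proper clopen subset of $K$ of $d$-diameter less than $\epsilon/2$. I would then apply \Cref{lm:mainmetric} to $d$, $\epsilon$, $\delta = 1$, $K'$, the trivial partition $\{K'\}$ of $K'$, and $e_1 = e$, to obtain $\tildem d \in \cM$ with $\nm{\tildem d - d}_\infty < \epsilon$, agreeing with $d$ off $K'$, and such that $(K',\tildem d)$ is proportional to $(K,e)$. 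Now pick the base point $x_0 \in K'$ and apply \Cref{freespacesplit} to the clopen partition $\{K', K \setminus K'\}$ of $K$ to obtain
\[
\free{K, \tildem d} \simeq \free{K', \tildem d} \oplus_1 \free{(K \setminus K') \cup \{x_0\}, \tildem d}.
\]
Since $(K', \tildem d)$ is proportional to $(K, e)$, the summand $\free{K', \tildem d}$ is isometric up to rescaling to $\free{K, e}$, and therefore fails the AP; since the AP passes to complemented subspaces, $\free{K, \tildem d}$ fails the AP as well, i.e.\ $\tildem d \in \cA_f$. Hence $\cA_f$ is dense in $\cM$.

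There is no serious obstacle: all the necessary machinery has already been assembled in the preliminaries. The only point worth double-checking is the existence of a proper clopen set $K' \subsetneq K$ with arbitrarily small $d$-diameter, which follows at once from the continuity of $d$ and the fact that the cylinders $C_a$ have $\mu$-diameter tending to $0$.
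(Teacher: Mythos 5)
Your proof is correct and follows essentially the same route as the paper: meagerness from \Cref{a1residual}, and density by transplanting the metric of \cite{hajeklancienpernecka}*{Corollary 2.2} onto a clopen piece via the metric-modification machinery and then invoking the $\ell_1$-decomposition of \Cref{freespacesplit} together with the heredity of the AP by complemented subspaces. The only (harmless) differences are cosmetic: you apply \Cref{lm:mainmetric} to a single small cylinder $C_a$ instead of using \Cref{cor:closemetric} to replace the metric on every cell of a partition of $K$, and by placing the base point inside $K'$ you sidestep the codimension-one observation the paper needs for $\free{K_1',\tildem d}$.
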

\begin{proof}
	The fact that $\cA_f$ is meager follows from \Cref{a1residual}. To show it is dense, let $d\in\cM$ and $\epsilon>0$. By \cite{hajeklancienpernecka}*{Corollary 2.2} there exists $d'\in\cA_f$. According to \Cref{cor:closemetric} with $K'=K$ and $e_i = d'$ for all $i$, there exists $\tildem d\in\cM$ and a partition $\{K_1,\ldots,K_n\}$ of $K$ consisting of clopen sets such that $\nm{\tildem d - d}_\infty < \epsilon$ and $(K_i,\tildem d)$ is proportional to $(K,d')$. According to \Cref{freespacesplit}, $$\free{K,\tildem d} \simeq \free{K_1',\tildem d} \oplus_1 \ldots \oplus_1 \free{K_n',\tildem d},$$ where $K_i' = K_i\cup\{x_0\}$ and $x_0$ is the base point of $K$. Since proportional metric spaces have isometrically isomorphic free spaces, $\free{K_1,\tildem d}$ fails the AP. As $\free{K_1,\tildem d}$ has codimension 1 in $\free{K_1', \tildem d}$, we have that $\free{K_1', \tildem d}$ fails the AP as well. Hence $\free{K,\tildem d}$ fails the AP.
\end{proof}

We now state and prove some properties of the set $\cP$ of metrics $d$ for which $(K,d)$ is purely 1-unrectifiable. Denote by $\Delta$ the Lebesgue measure on $\R$.

\begin{proposition}\label{prop:pgdelta}
	The set $\cP$ is $G_\delta$ in $\cM$.
\end{proposition}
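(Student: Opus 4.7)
I would prove this by showing the complement $\cM\setminus\cP$ is $F_\sigma$. Membership in $\cM\setminus\cP$ is witnessed by a compact $E\subseteq\R$ with $\Delta(E)>0$ and a bi-Lipschitz map $f\colon E\to(K,d)$, and any such witness fits into a bounded domain with bounded bi-Lipschitz constants. Thus I would exhaust $\cM\setminus\cP$ by the countable family
\[
N_k = \Bigl\{d\in\cM : \exists E\subseteq[-k,k]\text{ compact with }\Delta(E)\geq 1/k\text{ and }\exists f\colon E\to K\text{ with }\frac{|s-t|}{k}\leq d(f(s),f(t))\leq k|s-t|\ \forall s,t\in E\Bigr\},
\]
for $k\in\N$. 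A routine rounding argument gives $\cM\setminus\cP=\bigcup_{k\in\N}N_k$, so the task reduces to showing each $N_k$ is closed in $\cM$.

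For that, fix a sequence $(d_j)\subseteq N_k$ with $d_j\to d$ uniformly and choose witnesses $E_j, f_j$. The hyperspace of nonempty closed subsets of $[-k,k]$ is compact under the Hausdorff metric, so after passing to a subsequence $E_j\to E$ in Hausdorff metric for some compact $E\subseteq[-k,k]$. Since $E_j$ is eventually contained in any $\epsilon$-neighbourhood of $E$, upper semicontinuity of Lebesgue measure under Hausdorff convergence gives $\Delta(E)\geq 1/k$. In parallel, the graphs $\Gamma_j=\{(t,f_j(t)):t\in E_j\}$ live in the compact space $[-k,k]\times K$, so passing to a further subsequence $\Gamma_j\to G$ in Hausdorff metric for some compact $G\subseteq[-k,k]\times K$.

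It remains to check that $G$ is the graph of a function $f\colon E\to K$ witnessing $d\in N_k$. For any $s\in E$, picking $t_j\in E_j$ with $t_j\to s$ and extracting a subsequential limit of $f_j(t_j)$ via compactness of $K$ produces some $x\in K$ with $(s,x)\in G$. For uniqueness, if $(s,x),(s,x')\in G$ with approximating pairs $(t_j,x_j),(t_j',x_j')\in\Gamma_j$, then $d_j(x_j,x_j')\leq k|t_j-t_j'|\to 0$, which combined with $\nm{d_j-d}_\infty\to 0$ and the triangle inequality for $d$ forces $x=x'$. The same argument shows $f_j(t_j)\to f(s)$ for \emph{every} sequence $t_j\in E_j$ converging to $s$. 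Thus for arbitrary $s,t\in E$ and corresponding $s_j,t_j\in E_j$ the inequalities $|s_j-t_j|/k\leq d_j(f_j(s_j),f_j(t_j))\leq k|s_j-t_j|$ pass to the limit using $d_j\to d$ uniformly, giving $d\in N_k$.

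The main technical step is the diagonal extraction: juggling the Hausdorff limits of the domains $E_j$ and the graphs $\Gamma_j$ simultaneously, and proving the limit graph is single-valued over $E$. Once this is in place the measure bound survives by upper semicontinuity, and the bi-Lipschitz bounds survive by uniform convergence of $d_j$ to $d$, so closedness of $N_k$ follows.
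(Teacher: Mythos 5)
Your argument is correct, and at the level of strategy it matches the paper's: both exhibit $\cM\setminus\cP$ as a countable union of sets determined by a bi-Lipschitz constant together with a lower bound on Lebesgue measure, and prove each such set closed by a subsequence-extraction argument, whence $\cP$ is $G_\delta$. The implementations differ in a genuine way. The paper parametrises witnesses as maps $h\colon K'\to\R$ on compact subsets of $K$ (sets $V_{m,k}$), extends each $h_n$ to all of $K$ by McShane's theorem, and applies its \Cref{lm:diagonallipschitz} (an Arzel\`a--Ascoli argument adapted to metrics $d_n\to d$) to obtain a uniform limit $h$; the measure bound for the limit then needs a separate neighbourhood argument showing $h_{n_i}(K_{n_i}')$ is eventually contained in a $2\epsilon$-neighbourhood of $h(K')$. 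You keep the witnesses as maps $f\colon E\to K$ with $E\subseteq[-k,k]$, and replace extension-plus-equicontinuity by compactness of the hyperspaces of $[-k,k]$ and of $[-k,k]\times K$ in the Hausdorff metric, passing to limits of the domains and of the graphs and checking that the limit graph is single-valued over $E$ via $d_j(f_j(t_j),f_j(t_j'))\le k|t_j-t_j'|$, $\nm{d_j-d}_\infty\to 0$ and continuity of $d$ on $K^2$. What your route buys: the measure hypothesis sits on the domain, so it survives by plain upper semicontinuity of $\Delta$ under Hausdorff convergence, and neither McShane extension nor the equicontinuity lemma is needed; what the paper's route buys is reuse of \Cref{lm:diagonallipschitz}, which it needs elsewhere anyway, making its proof short given that machinery. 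Your assertion that $f_j(t_j)\to f(s)$ for \emph{every} sequence $t_j\in E_j$ with $t_j\to s$ does require the small subsequence argument (compactness of $K$ plus closedness of $G$ plus uniqueness), but as you indicate it is the same argument, so there is no gap.
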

\begin{proof}
	For $m,k\in\N$ define
	\begin{align*}
		V_{m,k} = \{& d\in \cM \; :\; \text{there exists a compact } K'\subseteq K \text{ and } h\colon K' \to \R, \text{ such that } \\& m^{-1} d(x,y) \leq |h(x)-h(y)| \leq md(x,y) \text{ for all } x,y\in K', \text{ and } \\& \Delta(h(K')) \geq k^{-1} \}.
	\end{align*}
	We will prove that $V_{m,k}$ is closed. Let $(d_n)_{n\in\N} \subseteq V_{m,k}$ converge uniformly to $d\in\cM$, and let $K_n'$ and $h_n\colon K_n'\to\R$ be the compact set and bilipschitz function, respectively, associated to $d_n$ for each $n\in\N$. By taking a subsequence, we can assume that the $K_n'$ converge to a compact set $K'\subseteq K$ (in the compact-open topology of $\R$). We extend each function $h_n$ to a function (again denoted by $h_n$) on $K$ by \cite{weaver}*{Theorem 1.33}, while preserving its Lipschitz constant with respect to $d_n$. We can assume, by translation, that $h_n(0,0,\ldots) = 0$ for each $n$. As the diameters of $(K,d_n)$ are uniformly bounded, the sets $h_n(K)$ are all contained in some fixed bounded interval. By \Cref{lm:diagonallipschitz}, there exists a subsequence $(h_{n_i})_i$ converging uniformly to a function $h\colon K\to\R$ with $\Liploc_d(h) \leq m$.
	
	Now let $x,y\in K'$ be arbitrary and let $(x_i)_i, (y_i)_i \subseteq K$ converge to $x$ and $y$, respectively, and be such that $x_i,y_i\in K_{n_i}'$ for each $i\in\N$. Let $\epsilon > 0$ and pick $i$ such that $$d(x,x_i), d(y, y_i), \nm{h - h_{n_i}}_\infty, \nm{d-d_{n_i}}_\infty < \epsilon.$$ Then
	\begin{align*}
		|h(x)-h(y)| &\geq |h(x_i) - h(y_i)| - |h(x) - h(x_i)| - |h(y) - h(y_i)| \\&\geq |h(x_i) - h(y_i)| - 2m\epsilon \geq |h_{n_i}(x_i) - h_{n_i}(y_i)| - (2+2m)\epsilon \\&\geq m^{-1} d_{n_i}(x_i,y_i) - (2+2m)\epsilon \geq m^{-1} d(x_i,y_i) - (2+2m+m^{-1})\epsilon \\&\geq m^{-1} (d(x,y)-d(x,x_i)-d(y,y_i)) - (2+2m+m^{-1})\epsilon \\&\geq m^{-1} d(x,y) - (2+2m+3m^{-1})\epsilon.
	\end{align*}
	As $\epsilon$ was arbitrary, we get $|h(x)-h(y)| \geq m^{-1}d(x,y)$. This means that $h$ satisfies the bilipschitz condition in the definition of $V_{m,k}$ on the set $K'$.
	
	To show that $\Delta(h(K')) \geq k^{-1}$, pick $\epsilon > 0$ and set $U = B_\epsilon^{|\cdot|}(h(K'))$. We have that $h^{-1}(U)$ is an open set in $K$ containing $K'$. Choose $i$ such that $K_{n_i}' \subseteq h^{-1}(U)$ and $\nm{h-h_{n_i}}_\infty < \epsilon$. Then $$h_{n_i}(K_{n_i}') \subseteq B_\epsilon^{|\cdot|}(h(K_{n_i}')) \subseteq B_\epsilon^{|\cdot|}(U) \subseteq B_{2\epsilon}^{|\cdot|}(h(K')).$$ Hence $\Delta(B_{2\epsilon}^{|\cdot|}(h(K'))) \geq \Delta(h_{n_i}(K_{n_i}')) \geq k^{-1}$. As $\epsilon$ was arbitrary, we get that $\Delta(h(K')) \geq k^{-1}$. Therefore $d\in V_{m,k}$ and so $V_{m,k}$ is closed. As $\cM\setminus\cP = \bigcup_{m,k\in\N} V_{m,k}$, we have that $\cM\setminus\cP$ is $F_\sigma$, and so $\cP$ is $G_\delta$.
\end{proof}

\begin{proposition}\label{prop:pdense}
	The set $\cP$ is dense in $\cM$.
\end{proposition}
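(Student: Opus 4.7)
My plan is to perturb an arbitrary $d \in \cM$ so that it locally becomes an ultrametric, forcing the resulting space to be purely 1-unrectifiable. Given $d \in \cM$ and $\epsilon > 0$, I apply \Cref{cor:closemetric} with $K' = K$ and $e_i = \mu$ for every $i$. This produces a partition $\{K_1, \ldots, K_n\}$ of $K$ into clopen sets and a metric $\tildem d \in \cM$ with $\nm{\tildem d - d}_\infty < \epsilon$ such that each $(K_i, \tildem d)$ is proportional (hence bi-Lipschitz equivalent) to $(K, \mu)$.

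To show $\tildem d \in \cP$, suppose toward a contradiction that $f \colon E \to (K, \tildem d)$ is a bi-Lipschitz embedding of some compact $E \subseteq \R$ with $\Delta(E) > 0$. Since $\{K_1, \ldots, K_n\}$ is a finite clopen partition of $K$ and $f$ is continuous, the preimages $\{f^{-1}(K_i)\}$ partition $E$ into compact subsets of $\R$, and some $E_0 := f^{-1}(K_{i_0})$ has positive Lebesgue measure. Composing $f|_{E_0}$ with the proportional bijection $(K_{i_0}, \tildem d) \to (K, \mu)$ yields a bi-Lipschitz map $g \colon E_0 \to (K, \mu)$, so the problem reduces to ruling out bi-Lipschitz images of compact positive-measure subsets of $\R$ inside $(K, \mu)$.

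The key feature to exploit is that $\mu$ is an ultrametric: for any distinct $p, q \in K$ first disagreeing at coordinate $n$, the level-$n$ cylinder $C_{p|_{\{1,\ldots,n\}}}$ is a clopen set containing $p$ but not $q$, and at $\mu$-distance at least $\mu(p, q)$ from its complement. Pulling back via $g^{-1}$ and using the bi-Lipschitz constant $L$ of $g$, I deduce that for every distinct $u, v \in E_0$ there is a clopen partition of $E_0$ separating $u$ and $v$ whose two pieces lie at Euclidean distance at least $L^{-2}|u - v|$. A short structural observation then yields that the open interval between $u$ and $v$ contains a gap of $E_0$ of length at least $L^{-2}|u - v|$: for any such clopen partition $E_0 = C \sqcup D$ with $u \in C$, $v \in D$, $u < v$, the points $a = \sup(C \cap [u, v])$ and $b = \inf(D \cap (a, v])$ satisfy $a \in C$, $b \in D$, $(a, b) \cap E_0 = \emptyset$, so $(a, b) \subseteq [u, v]$ is a gap of $E_0$ of length at least $d(C, D) \geq L^{-2}|u - v|$.

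The proof then finishes via the Lebesgue density theorem. Fix a density point $x$ of $E_0$; for any prescribed $\eta > 0$, choose $\delta > 0$ with $\Delta([x - \delta, x + \delta] \setminus E_0) < 2\delta\eta$, and use density to find $v \in E_0$ with $\delta/2 < |v - x| < \delta$, say $v > x$. The structural step then locates a gap of $E_0$ of length at least $L^{-2}\delta/2$ inside $[x - \delta, x + \delta]$, contradicting the measure estimate as soon as $\eta < L^{-2}/4$. The structural lemma translating the clopen partition back into a concrete gap of $E_0$ is the main technical point of the argument; once it is in place, the density calculation is essentially immediate.
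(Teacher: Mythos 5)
Your proof is correct, and while it starts the same way as the paper's --- both apply \Cref{cor:closemetric} with $e_i=\mu$ to make every piece $(K_i,\tildem d)$ proportional to $(K,\mu)$ --- it diverges in how it establishes that the resulting space is purely 1-unrectifiable. The paper handles this by citation: $(K,\mu)$ is uniformly disconnected, so by Weaver's Corollary 4.39 the free space $\free{K,\mu}$ is a dual space, and then Theorem B of the reference on purely 1-unrectifiable spaces gives pure 1-unrectifiability of $(K,\mu)$; the stability under proportionality and finite clopen unions is left as ``not hard to see.'' You instead prove the key fact directly: assuming a bi-Lipschitz copy of a compact $E\subseteq\R$ of positive measure, you pigeonhole a positive-measure piece $E_0$ into a single $K_{i_0}$ (this makes the paper's union step explicit), transfer to $(K,\mu)$, and use the ultrametric cylinder structure to pull back, for each pair $u,v\in E_0$, a clopen separation at Euclidean distance at least $L^{-2}|u-v|$; your sup/inf argument correctly converts this into a genuine gap of $E_0$ of that length inside $[u,v]$, which contradicts the Lebesgue density theorem (the density bookkeeping, including the existence of $v$ in the annulus for small $\eta$, checks out). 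The trade-off: the paper's route is shorter and leans on nontrivial external results, while yours is self-contained and elementary, and in effect proves the stronger statement that any uniformly disconnected (in particular ultrametric) compact space contains no bi-Lipschitz image of a positive-measure compact subset of $\R$, with the same quantitative gap estimate.
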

\begin{proof}
	Let $d\in\cM$ and $\epsilon>0$ be arbitrary. It is not hard to see that $(K,\mu)$ is uniformly disconnected. By \cite{weaver}*{Corollary 4.39 (ii)}, $\free{K,\mu}$ is a dual space. Then by \cite{purely1unrect}*{Theorem B}, $(K,\mu)$ is purely 1-unrectifiable. According to \Cref{cor:closemetric} applied to $d$, $\epsilon$, $K' = K$ and $e_i = \mu$ for all $i$, there exists a partition $\{K_1,\ldots,K_n\}$ of $K$ consisting of clopen sets, and a metric $\tildem d\in\cM$, such that $\nm{\tildem d - d}_\infty < \epsilon$ and $(K_i,\tildem d)$ is proportional to $(K,\mu)$ for all $i$. Therefore each $(K_i,\tildem d)$ is purely 1-unrectifiable. Hence, it is not hard to see that $(K,\tildem d)$ is also purely 1-unrectifiable.
\end{proof}

\begin{proposition}\label{mminuspdense}
	The set $\cM \setminus \cP$ is dense in $\cM$.
\end{proposition}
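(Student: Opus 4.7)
The plan is to mirror the structure of \Cref{prop:pdense}, but with an auxiliary metric $e\in\cM$ chosen so that $(K,e)$ \emph{fails} to be purely 1-unrectifiable, rather than satisfies it. Given $d\in\cM$ and $\epsilon>0$, we then apply \Cref{cor:closemetric} with $K'=K$ and $e_i=e$ for all $i$, producing a nearby $\tildem d\in\cM$ whose restriction to at least one clopen piece is (up to scaling) isometric to $(K,e)$. This will force $(K,\tildem d)$ to contain a bi-Lipschitz copy of a positive-measure compact subset of $\R$, and hence $\tildem d \in \cM\setminus\cP$.

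For the seed metric, I would take a fat Cantor set $F \subseteq [0,1]$, that is, a totally disconnected perfect compact set with $\Delta(F) > 0$ (for instance, the Smith--Volterra--Cantor set obtained by removing successively shorter open middle intervals so that the total removed length is strictly less than $1$). Since $F$ is homeomorphic to $K$ via some map $\varphi\colon K\to F$, one can transfer the Euclidean metric on $F$ to $K$ by setting $e(x,y) = |\varphi(x)-\varphi(y)|$, which yields $e\in\cM$. Then $(K,e)$ is isometric to the positive-measure subset $F\subseteq\R$, so $e\notin\cP$.

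Now for the density argument, apply \Cref{cor:closemetric} to $d$, $\epsilon$, $K'=K$, and $e_i = e$ for all $i$. This produces a partition $\{K_1,\ldots,K_n\}$ of $K$ into clopen sets together with $\tildem d\in\cM$ satisfying $\nm{\tildem d - d}_\infty<\epsilon$ and the property that $(K_i,\tildem d)$ is proportional to $(K,e)$ for each $i$. In particular, $(K_1,\tildem d)$ is isometric to a scaled copy of $F$, so the inclusion of $K_1$ into $(K,\tildem d)$ exhibits $K$ as containing a bi-Lipschitz image of a compact subset of $\R$ of positive Lebesgue measure. By the characterisation recalled in the introduction, this means $(K,\tildem d)$ is not purely 1-unrectifiable, i.e.~$\tildem d\in\cM\setminus\cP$, completing the proof.

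I do not anticipate any serious obstacle here; the only substantive input is the existence of the fat Cantor set $F$, which is a classical construction, and the rest is a direct reuse of the machinery already set up for \Cref{prop:pdense}, only with the ``model'' metric $\mu$ replaced by the Euclidean metric inherited from $F$.
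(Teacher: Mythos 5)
Your proposal is correct and is essentially the same argument as the paper's: the paper likewise takes a metric $d'$ with $(K,d')$ isometric to a positive-measure Cantor subset of $\R$, applies \Cref{cor:closemetric} with $e_i=d'$ to get $\tildem d$ within $\epsilon$ of $d$ whose restriction to a clopen piece is proportional to $(K,d')$, and concludes $(K,\tildem d)\notin\cP$. The only difference is that you spell out the fat Cantor set construction and the bi-Lipschitz-copy characterisation explicitly, which the paper leaves implicit.
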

\begin{proof}
	Let $d\in\cM$ and $\epsilon>0$ be arbitrary. Let $d'\in\cM$ be such that $(K,d')$ is isometric to a Cantor subset of $\R$ of positive measure. Then clearly $(K,d')$ is not purely 1-unrectifiable. According to \Cref{cor:closemetric} with $K=K'$ and $e_i = d'$ for all $i$, there exists $\tildem d\in \cM$ such that $\nm{d-\tildem d}_\infty < \epsilon$ and $(K,\tildem d)$ contains a clopen subset proportional to $(K,d')$. Then $(K,\tildem d)$ is not purely 1-unrectifiable.
\end{proof}

\begin{corollary}\label{cor:dualmapresidual}
	The set of metrics $d$ for which $\free{K,d}$ is the dual space of $\lipo{K,d}$ and both $\lipo{M}$ and $\free{M}$ have the MAP is residual in $\cM$.
\end{corollary}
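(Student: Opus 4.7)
The plan is to show that the desired set equals $\cA^1 \cap \cP$ and invoke the Baire category theorem in the Polish space $\cM$. By the results quoted in the introduction, $\lipo{K,d}$ is an isometric predual of $\free{K,d}$ if and only if $\lipo{K,d}$ separates points uniformly, which in turn (by \cite{purely1unrect}*{Theorems A, B}) is equivalent to $(K,d)$ being purely 1-unrectifiable, i.e.\ to $d\in\cP$. Next, $\free{K,d}$ has the MAP precisely when $d\in\cA^1$. Finally, for $d\in\cP$, the implication ``$\free{K,d}$ has MAP $\Rightarrow$ $\lipo{K,d}$ has MAP'' holds by \cite{casazzaap}*{Proposition 3.5}, and the converse implication holds by the remarks after \cite{godefroysurvey}*{Problem 6.5}. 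Hence the set in question is exactly $\cA^1 \cap \cP$.

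To conclude, I would combine \Cref{a1residual}, which gives that $\cA^1$ is a residual $F_{\sigma\delta}$ subset of $\cM$, with \Cref{prop:pgdelta} and \Cref{prop:pdense}, which together show that $\cP$ is a dense $G_\delta$ set and hence residual. Since $\cM$ is Polish (shown at the beginning of \Cref{sect_prelim}) and therefore Baire, the intersection of two residual sets is residual, giving the corollary.

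The only nontrivial point is bookkeeping the cited equivalences; there is no new analytic content beyond what has already been proved in the paper or quoted from the literature. No obstacle is anticipated, so the write-up should be short.

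\begin{proof}
If $d\in\cP$, then by \cite{purely1unrect}*{Theorems A, B}, $\lipo{K,d}$ separates points uniformly and is an isometric predual of $\free{K,d}$; conversely, if $\lipo{K,d}$ is an isometric predual of $\free{K,d}$ then (again by \cite{purely1unrect}*{Theorem B}) $d\in\cP$. Thus, for $d\in\cM$, $\free{K,d}$ is the dual of $\lipo{K,d}$ if and only if $d\in\cP$, and in that case, by \cite{casazzaap}*{Proposition 3.5} and the remarks following \cite{godefroysurvey}*{Problem 6.5}, $\free{K,d}$ and $\lipo{K,d}$ have the MAP simultaneously. Since $\free{K,d}$ has the MAP iff $d\in\cA^1$, the set in the statement equals $\cA^1\cap\cP$. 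By \Cref{a1residual} the set $\cA^1$ is residual in $\cM$, and by \Cref{prop:pgdelta} and \Cref{prop:pdense} the set $\cP$ is a dense $G_\delta$ subset of $\cM$, hence also residual. As $\cM$ is a Polish (and in particular Baire) space, the intersection $\cA^1\cap\cP$ is residual in $\cM$.
\end{proof}
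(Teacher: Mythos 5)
Your proof is correct and follows essentially the same route as the paper: identify the set with $\cA^1\cap\cP$ via \cite{purely1unrect}*{Theorem B} and \cite{casazzaap}*{Proposition 3.5}, then combine \Cref{a1residual}, \Cref{prop:pgdelta} and \Cref{prop:pdense}. The only difference is that you additionally invoke the remark after \cite{godefroysurvey}*{Problem 6.5}, which is harmless but not needed for the identification.
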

\begin{proof}
	Let $G$ be the set of metrics in question. Clearly $G\subseteq \cA^1$, and by \cite{purely1unrect}*{Theorem B}, $G\subseteq\cP$. Also, if $d\in \cP \cap \cA^1$ then by \cite{casazzaap}*{Proposition 3.5}, $\lipo{K,d}$ has the MAP, so $d\in G$. Therefore $G = \cP \cap \cA^1$ and the corollary follows from \Cref{prop:pgdelta}, \Cref{prop:pdense}, and \Cref{a1residual}.
\end{proof}

\section{Lipschitz equivalence classes}\label{sect_lipequiv}

In this section we consider subsets of $\cM$ consisting of Lipschitz-equivalent metrics, and describe some of their basic topological properties. For any $d\in\cM$, we consider the `Lipschitz equivalence class' of metrics
\begin{align*}
	E_d = \{d'\in\cM \; :\; & \text{ there exists a surjection } h\colon K\to K \text{ and } n > 0 \\&\text{ such that } n^{-1} d(x,y) \leq d'(h(x),h(y)) \leq n d(x,y) \text{ for all } x,y\in K\}.
\end{align*}

\begin{proposition}\label{conteqcl}
	There is a family $(d_\alpha) \subseteq \cM$ of size continuum such that $E_{d_\alpha} \cap E_{d_\beta} = \emptyset$ whenever $\alpha\not=\beta$.
\end{proposition}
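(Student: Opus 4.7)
The strategy is to snowflake the canonical ultrametric. For each $\alpha \in (0,1]$ define $d_\alpha := \mu^\alpha$. Since $\mu$ is an ultrametric (immediate from its definition), the power $\mu^\alpha$ is again an ultrametric and in particular a metric on $K$; it clearly induces the product topology, so $d_\alpha \in \cM$. This produces a family of cardinality $\mathfrak{c}$. I first reduce the disjointness statement to bi-Lipschitz inequivalence of the underlying metric spaces: if $d' \in E_{d_\alpha}$ with witness $h\colon K \to K$ and constant $n>0$, then the lower estimate $n^{-1} d_\alpha(x,y) \leq d'(h(x),h(y))$ forces $h$ to be injective, hence a bijection, so $(K,d_\alpha)$ and $(K,d')$ are bi-Lipschitz homeomorphic. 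Consequently $d' \in E_{d_\alpha} \cap E_{d_\beta}$ would give a bi-Lipschitz homeomorphism between $(K,d_\alpha)$ and $(K,d_\beta)$ by composition, and it suffices to rule this out for $\alpha \neq \beta$.

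The bi-Lipschitz invariant I would use is the asymptotic packing number. Let $N_\alpha(r)$ denote the largest size of an $r$-separated subset of $(K,d_\alpha)$. The clopen partition $\{C_a : a \in R_n\}$ consists of $2^n$ pieces, each of $d_\alpha$-diameter $2^{-(n+1)\alpha}$, while the centres $\{r_a : a \in R_n\}$ are pairwise $d_\alpha$-separated by at least $2^{-n\alpha}$ (their $\mu$-distance is $2^{-k}$ with $k \leq n$). Hence $N_\alpha(2^{-n\alpha}) \geq 2^n$, and a pigeon-hole argument gives equality: any $2^{-n\alpha}$-separated set meets each $C_a$ in at most one point, since $\diam_{d_\alpha}(C_a) = 2^{-(n+1)\alpha} < 2^{-n\alpha}$. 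Therefore $\log N_\alpha(2^{-n\alpha})/\log(2^{n\alpha}) = 1/\alpha$ for every $n$.

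Finally, if $h\colon (K,d_\alpha) \to (K,d_\beta)$ is bi-Lipschitz with $c_1 d_\alpha(x,y) \leq d_\beta(h(x),h(y)) \leq c_2 d_\alpha(x,y)$, then every $r$-separated subset of $(K,d_\alpha)$ is mapped to a $c_1 r$-separated subset of $(K,d_\beta)$, so $N_\beta(c_1 r) \geq N_\alpha(r)$. Specialising to $r = 2^{-n\alpha}$ and using the explicit formula for $N_\beta$ at scale $c_1 \cdot 2^{-n\alpha}$ yields $n(\alpha - \beta) \geq \log_2 c_1$ for all $n \in \N$, which forces $\alpha \geq \beta$; applying the same argument to $h^{-1}$ gives $\beta \geq \alpha$, and so $\alpha = \beta$. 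The only mildly delicate ingredient is the exact computation $N_\alpha(2^{-n\alpha}) = 2^n$, but this is a one-line pigeon-hole; beyond that, the proof is a clean snowflake-plus-dimension argument, with no substantial obstacle to anticipate.
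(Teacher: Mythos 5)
Your proposal is correct, but it proves the proposition by a genuinely different construction than the paper. The paper takes $(K,d_\lambda)$ isometric to the $\lambda$-middle-thirds Cantor subsets of $\R$ and distinguishes them by their Hausdorff dimensions $\log 2/\log\frac{2}{1-\lambda}$, quoting Falconer both for the dimension computation and for the bi-Lipschitz invariance of Hausdorff dimension; the reduction from $E_{d_\alpha}\cap E_{d_\beta}\neq\emptyset$ to a bilipschitz surjection $(K,d_\alpha)\to(K,d_\beta)$ is left implicit there, and you rightly spell it out (the lower Lipschitz bound forces the witnessing surjections to be bijections, and one composes). In place of Hausdorff dimension you use the snowflakes $d_\alpha=\mu^\alpha$ of the canonical ultrametric and a hands-on packing-number invariant: the exact count $N_\alpha(2^{-n\alpha})=2^n$, obtained from the partition $\{C_a: a\in R_n\}$ together with the centres $r_a$, which amounts to computing a box-type dimension $1/\alpha$. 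What your route buys is self-containedness and elementarity: no measure theory and no external citations, only the pigeonhole bound and the monotonicity of $r\mapsto N(r)$ under bi-Lipschitz maps; what the paper's route buys is brevity, since both the dimension value and its bi-Lipschitz invariance are off-the-shelf facts. One small point to tighten in a written version: there is no exact formula for $N_\beta$ at the scale $c_1 2^{-n\alpha}$, so you should round to the nearest dyadic scale, i.e.\ use that $N_\beta(s)\leq 2^m$ whenever $s>2^{-(m+1)\beta}$; this changes your displayed inequality $n(\alpha-\beta)\geq\log_2 c_1$ only by a bounded additive constant (of the order of $\beta$), which of course still forces $\alpha\geq\beta$ as $n\to\infty$, and the symmetric argument applied to $h^{-1}$ finishes the proof.
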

\begin{proof}
	For each $\lambda\in(0,1)$, let $d_\lambda\in\cM$ be such that $(K,d_\lambda)$ is isometric to the $\lambda$-middle-thirds Cantor subset of $\R$. By \cite{falconer}*{Exercise 2.14}, the Hausdorff dimension of $(K,d_\lambda)$ for each $\lambda\in(0,1)$ is $\frac{\log 2}{\log\frac{2}{1-\lambda}}$. By \cite{falconer}*{Corollary 2.4} there is no bilipschitz surjection from $(K,d_\lambda)$ to $(K,d_{\lambda'})$ for different $\lambda$ and $\lambda'$, and the statement follows.
\end{proof}

\begin{proposition}
	For each $d\in\cM$, $E_d$ is a dense meager $F_\sigma$ set in $\cM$.
\end{proposition}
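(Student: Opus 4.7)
The plan is to write $E_d = \bigcup_{n=1}^\infty E_d^n$, where
\begin{align*}
	E_d^n = \{ d' \in \cM \; :\; & \text{there is a bijection } h\colon K\to K \text{ with } \\
	& n^{-1} d(x,y) \leq d'(h(x),h(y)) \leq n d(x,y) \text{ for all } x,y \in K \};
\end{align*}
any surjection $h$ realising $d'\in E_d$ is automatically a bijection because $d(x,y)>0$ forces $d'(h(x),h(y))>0$, and the real bilipschitz constant in the definition of $E_d$ may be replaced by its ceiling. I will then show three things: each $E_d^n$ is closed (giving the $F_\sigma$ property); $\cM\setminus E_d$ is dense (so each closed $E_d^n$ has empty interior and $E_d$ is meager); and $E_d$ itself is dense.

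For closedness of $E_d^n$, take $(d_k)\subseteq E_d^n$ converging uniformly to $d'\in\cM$ with witnessing bijections $h_k$, and apply \Cref{lm:diagonallipschitz} to $(h_k^{-1})$ viewed as surjections $(K,d_k)\to(K,d)$ with $\Liploc_{d_k,d}(h_k^{-1})\leq n$. Parts~(1) and~(2) of that lemma yield a subsequential uniform limit $g\colon K\to K$ that is a bilipschitz bijection with $\Liploc_{d',d}(g)\leq n$ and $\Liploc_{d,d'}(g^{-1})\leq n$, so $h:=g^{-1}$ witnesses $d'\in E_d^n$.

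For density of $E_d$, fix $d'\in\cM$ and $\epsilon>0$. Apply \Cref{cor:closemetric} with $K'=K$ to obtain a clopen partition $\{K_1,\ldots,K_n\}$ of $K$. Independently pick any clopen partition $\{L_1,\ldots,L_n\}$ of $K$ into $n$ pieces (possible for any $n$ using cylinder sets) together with homeomorphisms $\phi_i\colon K\to L_i$, and set $e_i(x,y):=d(\phi_i(x),\phi_i(y))$, so $e_i\in\cM$ and $(K,e_i)$ is isometric to $(L_i,d)$. Feeding these $e_i$ back into \Cref{cor:closemetric} produces $\tildem d'$ with $\nm{\tildem d'-d'}_\infty<\epsilon$ and each $(K_i,\tildem d')$ proportional to $(K,e_i)$; gluing the resulting proportional bijections $L_i\to K_i$ gives a bijection $h\colon K\to K$ that is bilipschitz within each piece by construction, and bilipschitz across pieces because all inter-piece distances $d(L_i,L_j), \tildem d'(K_i,K_j)$ are positive and both diameters are finite. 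Hence $\tildem d'\in E_d$.

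The main obstacle is proving $\cM\setminus E_d$ dense, where \Cref{conteqcl} is essential. Since the family $(d_\alpha)$ supplied by that proposition has cardinality continuum and the classes $E_{d_\alpha}$ are pairwise disjoint, while $K$ has only countably many non-empty clopen subsets, a cardinality count gives $d^*$ in the family with the property that $(K,d^*)$ is not bilipschitz to $(C,d|_C)$ for \emph{any} clopen $C\subseteq K$: indeed, for each such $C$ at most one $d_\alpha$ can be bilipschitz to $(C,d|_C)$, since two such would force $d_\alpha \sim d_\beta$ against disjointness. Given $d'\in\cM$ and $\epsilon>0$, apply \Cref{cor:closemetric} with $K'=K$ and all $e_i:=d^*$ to obtain $\tildem d'$ within $\epsilon$ of $d'$ such that $(K_1,\tildem d')$ is proportional, hence bilipschitz, to $(K,d^*)$. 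If $\tildem d'\in E_d$ were witnessed by some bilipschitz bijection $h\colon(K,d)\to(K,\tildem d')$, then $h^{-1}(K_1)$ would be clopen in $K$ and $(h^{-1}(K_1),d)$ would be bilipschitz to $(K_1,\tildem d')$ and hence to $(K,d^*)$, contradicting the choice of $d^*$. This yields $\tildem d'\in\cM\setminus E_d$, completing the proof.
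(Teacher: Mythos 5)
Your proof is correct, and for most of its length it runs along the same lines as the paper: the same decomposition $E_d=\bigcup_n E_d^n$, the same use of \Cref{lm:diagonallipschitz} (applied to the inverse maps, which is a clean way to match the lemma's hypothesis that the varying metrics sit on the domain) to get each $E_d^n$ closed, and essentially the same density construction via \Cref{cor:closemetric} with the $e_i$ chosen isometric to pieces of $(K,d)$ and a gluing of piecewise proportional bijections. The genuine difference is the meagerness step. The paper gets nowhere-density of each closed $E_d^n$ almost for free: for any $e\notin E_d$ the class $E_e$ is dense (by the same density argument) and disjoint from $E_d$, since Lipschitz equivalence is an equivalence relation; existence of such an $e$ is all that is needed, and it follows from \Cref{conteqcl}. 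You instead prove directly that $\cM\setminus E_d$ is dense: a counting argument (continuum many pairwise disjoint classes from \Cref{conteqcl} versus only countably many nonempty clopen subsets of $K$) produces a metric $d^*$ whose space is not bilipschitz to $(C,d|_C)$ for any clopen $C$, and then \Cref{cor:closemetric} plants a piece proportional to $(K,d^*)$ arbitrarily close to any given metric; since a bilipschitz bijection witnessing membership in $E_d$ is a homeomorphism, it would pull that piece back to a clopen subset of $(K,d)$, a contradiction. Both arguments are sound and both ultimately lean on \Cref{conteqcl}; the paper's version is shorter because it recycles the density statement, while yours is more explicit -- it exhibits concrete nearby metrics outside $E_d$ and isolates the useful observation that one can choose a metric not bilipschitz to any clopen piece of a prescribed $(K,d)$, at the cost of the extra counting step.
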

\begin{proof}
	Fix $d\in\cM$. If $\hat d\in\cM$ and $\epsilon > 0$ are arbitrary, we will show that there exists a metric $\tildem d\in E_d$ which is $\epsilon$-close to $\hat d$. We apply \Cref{cor:closemetric} to $\hat d$, $K'=K$, and $\epsilon$, to get a partition $K_1,\ldots,K_n$ of $K$ consisting of clopen sets. Now if $e_i \in \cM$ is such that $(K,e_i)$ is isometric to $(K_i,d)$ for $i=1,\ldots,n$, then by the same corollary we obtain a metric $\tildem d\in\cM$, such that $\nm{\hat d -\tildem d}_\infty < \epsilon$, and $(K_i,\tildem d)$ is proportional to $(K,e_i)$, and hence to $(K_i,d)$, for all $i$. It is not hard to see that then $\tildem d \in E_d$. Therefore $E_d$ is dense in $\cM$.
	
	Now for $n\in\N$, consider the set 
	\begin{align*}
		E_d^n = \{d'\in\cM \; :\;\; & \text{there exists a surjection } h\colon K\to K \text{ such that } \\& n^{-1} d(x,y) \leq d'(h(x),h(y)) \leq n d(x,y) \text{ for all } x,y\in K\}.
	\end{align*}
	Suppose that $(d_k)_{k=1}^\infty$ is a sequence in $E_d^n$ converging to $d_0\in\cM$. Let $h_k\colon K\to K$ be the surjection associated with $d_k$, for each $k\in\N$. By \Cref{lm:diagonallipschitz}, there exists a surjective function $h\colon K\to K$ satisfying the condition in the definition of $E_d^n$ with $d_0$ for $d'$. This shows that $d_0\in E_d^n$ and so $E_d^n$ is closed. Since $E_d = \bigcup_{n\in\N} E_d^n$, $E_d$ is $F_\sigma$. Moreover, $E_d^n$ is nowhere dense because if $e\not\in E_d$ then $E_{e}$ is dense in $\cM$ and $E_{e}\cap E_d = \emptyset$. Therefore $E_d$ is meager.
\end{proof}

For a bounded metric space $M$, we define $\Lip{M}$ (resp.~$\Lip{M,\C}$) to be the space of all real (resp.~complex)-valued Lipschitz functions on $M$ equipped with the norm $\nm f = \Lip{f} + \nm{f}_\infty$ (the Lipschitz constant of a complex-valued function is defined by the same supremum as for a real-valued function). Under pointwise multiplication of functions, $\Lip{M}$ and $\Lip{M,\C}$ are Banach algebras. Note that by \cite{weaver}*{Lemma 1.28}, $\Lip{M,\C}$ is isomorphic to $\Lip{M}\oplus \Lip{M}$ as a Banach space. We can also view $\Lipo{M}$ as an algebra under pointwise multiplication, however the Lipschitz constant is not submultiplicative. Instead, it satisfies $\Lip{fg} \leq \nm f _\infty \Lip{g} + \nm g _\infty \Lip{f}$, which implies $\nm{fg} \leq 2\diam(M) \nm f \nm g$. Therefore the product is continuous, which implies that there is an equivalent norm which is submultiplicative.

\begin{corollary}\label{cor:continuumnonisom}
	There exists a family $(d_\alpha) \subseteq\cM$ of size continuum such that $\Lipo{K,d_\alpha}$ and $\Lipo{K,d_\beta}$ are nonisomorphic as algebras for $d_\alpha \not= d_\beta$.
\end{corollary}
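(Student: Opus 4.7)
The plan is to reuse the family $(d_\alpha)$ of size continuum from \Cref{conteqcl}, for which the Lipschitz equivalence classes $E_{d_\alpha}$ are pairwise disjoint. It will suffice to show that an algebra isomorphism $T\colon\Lipo{K,d_\alpha}\to\Lipo{K,d_\beta}$ with $\alpha\ne\beta$ would produce a bi-Lipschitz surjection $(K,d_\beta)\to(K,d_\alpha)$, so that $d_\alpha$ would lie in $E_{d_\beta}\cap E_{d_\alpha}$, contrary to the disjointness of the classes.

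First, I would extend $T$ to the unital Lipschitz algebras. Each $f\in\Lip{K,d}$ decomposes uniquely as $f=f_0+c\mathbf 1$ with $f_0\in\Lipo{K,d}$ and $c=f(x_0)\in\R$; since $\Lipo{K,d}$ is an ideal in $\Lip{K,d}$, a direct check shows that $\tildem T(f_0+c\mathbf 1):=T(f_0)+c\mathbf 1$ defines a unital algebra isomorphism $\tildem T\colon\Lip{K,d_\alpha}\to\Lip{K,d_\beta}$.

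The second step, which I expect to be the main obstacle, is to upgrade $\tildem T$ to a bounded map, since the corollary only asserts algebraic non-isomorphism and we cannot assume continuity a priori. Here I would invoke the classical automatic-continuity theorem (due to \v{S}ilov, generalised by Rickart) that every algebra homomorphism from a Banach algebra into a commutative semisimple Banach algebra is continuous. Both $\Lip{K,d_\alpha}$ and $\Lip{K,d_\beta}$ are Banach algebras under an equivalent submultiplicative norm (as noted just before the corollary), and each is semisimple because the point evaluations $\delta_x$, $x\in K$, form a separating family of characters. Hence both $\tildem T$ and $\tildem T^{-1}$ are bounded.

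For the last step, compactness of $K$ identifies the maximal ideal space of $\Lip{K,d}$ with $K$ itself via $x\mapsto\delta_x$, and Gelfand theory then yields a homeomorphism $\phi\colon K\to K$ with $\tildem T(f)=f\circ\phi$ for every $f\in\Lip{K,d_\alpha}$. Feeding $\tildem T$ the $1$-Lipschitz functions $f_p=d_\alpha(\cdot,p)$, $p\in K$, gives $|d_\alpha(\phi(y),p)-d_\alpha(\phi(z),p)|\leq\nm{\tildem T}\,d_\beta(y,z)$ for all $y,z\in K$, and setting $p=\phi(z)$ produces $d_\alpha(\phi(y),\phi(z))\leq\nm{\tildem T}\,d_\beta(y,z)$. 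The analogous bound for $\tildem T^{-1}$ gives the reverse inequality, so $\phi\colon(K,d_\beta)\to(K,d_\alpha)$ is a bi-Lipschitz surjection, and the contradiction follows.
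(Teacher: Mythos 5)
Your proposal is correct and follows the same global strategy as the paper: take the family from \Cref{conteqcl}, extend a hypothetical algebra isomorphism $\Lipo{K,d_\alpha}\to\Lipo{K,d_\beta}$ to a unital isomorphism of $\Lip{K,d_\alpha}$ onto $\Lip{K,d_\beta}$ by the identical formula, and derive a contradiction by producing a bi-Lipschitz surjection between $(K,d_\beta)$ and $(K,d_\alpha)$, which is forbidden by the disjointness of the classes $E_{d_\alpha}$. Where you diverge is the middle step: the paper complexifies ($\hat h(f+ig)=\tildem h(f)+i\tildem h(g)$) precisely so that it can quote Sherbert's theorem, which already applies to purely algebraic isomorphisms of complex Lipschitz algebras and delivers the bi-Lipschitz homeomorphism in one stroke; you instead reprove the relevant special case of Sherbert's result by hand, via automatic continuity (\v{S}ilov--Rickart) plus identification of the maximal ideal space with $K$ and the trick of testing on the distance functions $d_\alpha(\cdot,p)$. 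Your route is more self-contained and makes the mechanism transparent, at the cost of two small points you should tidy up: (i) the automatic continuity theorem and Gelfand theory you cite are standard for \emph{complex} Banach algebras, whereas you work with the real algebras, so either complexify first (exactly the paper's reason for introducing $\Lip{(K,d),\C}$) or verify directly that every real character of $\Lip{K,d}$ is a point evaluation (true: a maximal ideal not contained in any $\ker\delta_x$ would contain, by compactness, a function of the form $\sum f_{x_i}^2$ bounded away from zero, hence invertible); and (ii) since the algebra norm is $\Lip{f}+\nm{f}_\infty$, the estimate from $f_p=d_\alpha(\cdot,p)$ gives $d_\alpha(\phi(y),\phi(z))\leq \nm{\tildem T}\bigl(1+\diam_{d_\alpha}(K)\bigr)d_\beta(y,z)$ rather than constant $\nm{\tildem T}$ — harmless, as any bi-Lipschitz constant suffices for the contradiction.
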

\begin{proof}
	Suppose that $d_1,d_2\in\cM$ are such that $\Lipo{K,d_1}$ and $\Lipo{K,d_2}$ are isomorphic as algebras. If $h\colon\Lipo{K,d_1} \to \Lipo{K,d_2}$ is the isomorphism, then the map $\tildem h\colon \Lip{K,d_1}\to\Lip{K,d_2}$, given by $\tildem h(\lambda\textbf{1}_K + f) = \lambda\textbf{1}_K + h(f),$ is an algebra isomorphism, where $\lambda\in\R$, $\textbf{1}_K$ is the constant function on $K$ equal to $1$, and $f\in\Lipo{K,d_1}$. Then the map $\hat{h}\colon \Lip{(K,d_1),\C}\to\Lip{(K,d_2),\C}$, given by $\hat h (f+ig) = \tildem h(f) + i\tildem h(g)$, is again an algebra isomorphism. Finally, by \cite{sherbert}*{Theorem 5.1}, there exists a bilipschitz surjection from $(K,d_1)$ to $(K,d_2)$, and the corollary follows from \Cref{conteqcl}.
\end{proof}

We do not know (in ZFC) whether there exists a family $(d_\alpha) \subseteq\cM$ of size continuum such that $\free{K,d_\alpha}$ is not linearly isomorphic to $\free{K,d_\beta}$ for $\alpha \not= \beta$. 

\subsection*{Acknowledgements}

I would like to thank my supervisor, Dr.~Richard Smith, for the many helpful conversations and suggestions during the research process and the writing of the manuscript.

\bibliography{document}

\end{document}